\newtheorem{theorem}{Theorem}[section]
\newtheorem{lemma}[theorem]{Lemma}
\newtheorem{corollary}[theorem]{Corollary}
\newtheorem{claim}[theorem]{Claim}
\newtheorem{conjecture}[theorem]{Conjecture}
\newtheorem{definition}[theorem]{Definition}
\newtheorem*{remark}{Remark}
\newtheorem{example}[theorem]{Example}
\newtheorem*{proof}{Proof}
\newcommand{\set}[1]{\{\,#1\,\}}
\newcommand{\BA}{\mathbb{A}}
\newcommand{\BB}{\mathbb{B}}
\newcommand{\BC}{\mathbb{C}}
\newcommand{\BD}{\mathbb{D}}
\newcommand{\BE}{\mathbb{E}}
\newcommand{\BF}{\mathbb{F}}
\newcommand{\BG}{\mathbb{G}}
\newcommand{\BH}{\mathbb{H}}
\newcommand{\BI}{\mathbb{I}}
\newcommand{\BJ}{\mathbb{J}}
\newcommand{\BK}{\mathbb{K}}
\newcommand{\BL}{\mathbb{L}}
\newcommand{\BM}{\mathbb{M}}
\newcommand{\BN}{\mathbb{N}}
\newcommand{\BO}{\mathbb{O}}
\newcommand{\BP}{\mathbb{P}}
\newcommand{\BQ}{\mathbb{Q}}
\newcommand{\BR}{\mathbb{R}}
\newcommand{\BS}{\mathbb{S}}
\newcommand{\BT}{\mathbb{T}}
\newcommand{\BU}{\mathbb{U}}
\newcommand{\BV}{\mathbb{V}}
\newcommand{\BW}{\mathbb{W}}
\newcommand{\BX}{\mathbb{X}}
\newcommand{\BY}{\mathbb{Y}}
\newcommand{\BZ}{\mathbb{Z}}
\newcommand{\bA}{\mathbf{A}}
\newcommand{\cA}{\mathcal{A}}
\newcommand{\fA}{\mathfrak{A}}
\newcommand{\rA}{\mathrm{A}}
\newcommand{\sA}{\mathscr{A}}
\newcommand{\bB}{\mathbf{B}}
\newcommand{\cB}{\mathcal{B}}
\newcommand{\fB}{\mathfrak{B}}
\newcommand{\rB}{\mathrm{B}}
\newcommand{\sB}{\mathscr{B}}
\newcommand{\bC}{\mathbf{C}}
\newcommand{\cC}{\mathcal{C}}
\newcommand{\fC}{\mathfrak{C}}
\newcommand{\rC}{\mathrm{C}}
\newcommand{\sC}{\mathscr{C}}
\newcommand{\bD}{\mathbf{D}}
\newcommand{\cD}{\mathcal{D}}
\newcommand{\fD}{\mathfrak{D}}
\newcommand{\rD}{\mathrm{D}}
\newcommand{\sD}{\mathscr{D}}
\newcommand{\bE}{\mathbf{E}}
\newcommand{\cE}{\mathcal{E}}
\newcommand{\fE}{\mathfrak{E}}
\newcommand{\rE}{\mathrm{E}}
\newcommand{\sE}{\mathscr{E}}
\newcommand{\bF}{\mathbf{F}}
\newcommand{\cF}{\mathcal{F}}
\newcommand{\fF}{\mathfrak{F}}
\newcommand{\rF}{\mathrm{F}}
\newcommand{\sF}{\mathscr{F}}
\newcommand{\bG}{\mathbf{G}}
\newcommand{\cG}{\mathcal{G}}
\newcommand{\fG}{\mathfrak{G}}
\newcommand{\rG}{\mathrm{G}}
\newcommand{\sG}{\mathscr{G}}
\newcommand{\bH}{\mathbf{H}}
\newcommand{\cH}{\mathcal{H}}
\newcommand{\fH}{\mathfrak{H}}
\newcommand{\rH}{\mathrm{H}}
\newcommand{\sH}{\mathscr{H}}
\newcommand{\bI}{\mathbf{I}}
\newcommand{\cI}{\mathcal{I}}
\newcommand{\fI}{\mathfrak{I}}
\newcommand{\rI}{\mathrm{I}}
\newcommand{\sI}{\mathscr{I}}
\newcommand{\bJ}{\mathbf{J}}
\newcommand{\cJ}{\mathcal{J}}
\newcommand{\fJ}{\mathfrak{J}}
\newcommand{\rJ}{\mathrm{J}}
\newcommand{\sJ}{\mathscr{J}}
\newcommand{\bK}{\mathbf{K}}
\newcommand{\cK}{\mathcal{K}}
\newcommand{\fK}{\mathfrak{K}}
\newcommand{\rK}{\mathrm{K}}
\newcommand{\sK}{\mathscr{K}}
\newcommand{\bL}{\mathbf{L}}
\newcommand{\cL}{\mathcal{L}}
\newcommand{\fL}{\mathfrak{L}}
\newcommand{\rL}{\mathrm{L}}
\newcommand{\sL}{\mathscr{L}}
\newcommand{\bM}{\mathbf{M}}
\newcommand{\cM}{\mathcal{M}}
\newcommand{\fM}{\mathfrak{M}}
\newcommand{\rM}{\mathrm{M}}
\newcommand{\sM}{\mathscr{M}}
\newcommand{\bN}{\mathbf{N}}
\newcommand{\cN}{\mathcal{N}}
\newcommand{\fN}{\mathfrak{N}}
\newcommand{\rN}{\mathrm{N}}
\newcommand{\sN}{\mathscr{N}}
\newcommand{\bO}{\mathbf{O}}
\newcommand{\cO}{\mathcal{O}}
\newcommand{\fO}{\mathfrak{O}}
\newcommand{\rO}{\mathrm{O}}
\newcommand{\sO}{\mathscr{O}}
\newcommand{\bP}{\mathbf{P}}
\newcommand{\cP}{\mathcal{P}}
\newcommand{\fP}{\mathfrak{P}}
\newcommand{\rP}{\mathrm{P}}
\newcommand{\sP}{\mathscr{P}}
\newcommand{\bQ}{\mathbf{Q}}
\newcommand{\cQ}{\mathcal{Q}}
\newcommand{\fQ}{\mathfrak{Q}}
\newcommand{\rQ}{\mathrm{Q}}
\newcommand{\sQ}{\mathscr{Q}}
\newcommand{\bR}{\mathbf{R}}
\newcommand{\cR}{\mathcal{R}}
\newcommand{\fR}{\mathfrak{R}}
\newcommand{\rR}{\mathrm{R}}
\newcommand{\sR}{\mathscr{R}}
\newcommand{\bS}{\mathbf{S}}
\newcommand{\cS}{\mathcal{S}}
\newcommand{\fS}{\mathfrak{S}}
\newcommand{\rS}{\mathrm{S}}
\newcommand{\sS}{\mathscr{S}}
\newcommand{\bT}{\mathbf{T}}
\newcommand{\cT}{\mathcal{T}}
\newcommand{\fT}{\mathfrak{T}}
\newcommand{\rT}{\mathrm{T}}
\newcommand{\sT}{\mathscr{T}}
\newcommand{\bU}{\mathbf{U}}
\newcommand{\cU}{\mathcal{U}}
\newcommand{\fU}{\mathfrak{U}}
\newcommand{\rU}{\mathrm{U}}
\newcommand{\sU}{\mathscr{U}}
\newcommand{\bV}{\mathbf{V}}
\newcommand{\cV}{\mathcal{V}}
\newcommand{\fV}{\mathfrak{V}}
\newcommand{\rV}{\mathrm{V}}
\newcommand{\sV}{\mathscr{V}}
\newcommand{\bW}{\mathbf{W}}
\newcommand{\cW}{\mathcal{W}}
\newcommand{\fW}{\mathfrak{W}}
\newcommand{\rW}{\mathrm{W}}
\newcommand{\sW}{\mathscr{W}}
\newcommand{\bX}{\mathbf{X}}
\newcommand{\cX}{\mathcal{X}}
\newcommand{\fX}{\mathfrak{X}}
\newcommand{\rX}{\mathrm{X}}
\newcommand{\sX}{\mathscr{X}}
\newcommand{\bY}{\mathbf{Y}}
\newcommand{\cY}{\mathcal{Y}}
\newcommand{\fY}{\mathfrak{Y}}
\newcommand{\rY}{\mathrm{Y}}
\newcommand{\sY}{\mathscr{Y}}
\newcommand{\bZ}{\mathbf{Z}}
\newcommand{\cZ}{\mathcal{Z}}
\newcommand{\fZ}{\mathfrak{Z}}
\newcommand{\rZ}{\mathrm{Z}}
\newcommand{\sZ}{\mathscr{Z}}
\newcommand{\ba}{\mathbf{a}}
\newcommand{\fa}{\mathfrak{a}}
\newcommand{\ra}{\mathrm{a}}
\newcommand{\bb}{\mathbf{b}}
\newcommand{\fb}{\mathfrak{b}}
\newcommand{\rb}{\mathrm{b}}
\newcommand{\bc}{\mathbf{c}}
\newcommand{\fc}{\mathfrak{c}}
\newcommand{\rc}{\mathrm{c}}
\newcommand{\bd}{\mathbf{d}}
\newcommand{\fd}{\mathfrak{d}}
\newcommand{\rd}{\mathrm{d}}
\newcommand{\be}{\mathbf{e}}
\newcommand{\fe}{\mathfrak{e}}
\newcommand{\re}{\mathrm{e}}
\newcommand{\ff}{\mathfrak{f}}
\newcommand{\rf}{\mathrm{f}}
\newcommand{\bg}{\mathbf{g}}
\newcommand{\fg}{\mathfrak{g}}
\newcommand{\rg}{\mathrm{g}}
\newcommand{\bh}{\mathbf{h}}
\newcommand{\fh}{\mathfrak{h}}
\newcommand{\rh}{\mathrm{h}}
\newcommand{\bi}{\mathbf{i}}
\newcommand{\ri}{\mathrm{i}}
\newcommand{\bj}{\mathbf{j}}
\newcommand{\fj}{\mathfrak{j}}
\newcommand{\rj}{\mathrm{j}}
\newcommand{\bk}{\mathbf{k}}
\newcommand{\fk}{\mathfrak{k}}
\newcommand{\rk}{\mathrm{k}}
\newcommand{\bl}{\mathbf{l}}
\newcommand{\fl}{\mathfrak{l}}
\newcommand{\rl}{\mathrm{l}}
\newcommand{\bm}{\mathbf{m}}
\newcommand{\fm}{\mathfrak{m}}
\newcommand{\bn}{\mathbf{n}}
\newcommand{\fn}{\mathfrak{n}}
\newcommand{\rn}{\mathrm{n}}
\newcommand{\bo}{\mathbf{o}}
\newcommand{\fo}{\mathfrak{o}}
\newcommand{\ro}{\mathrm{o}}
\newcommand{\bp}{\mathbf{p}}
\newcommand{\fp}{\mathfrak{p}}
\newcommand{\rp}{\mathrm{p}}
\newcommand{\bq}{\mathbf{q}}
\newcommand{\fq}{\mathfrak{q}}
\newcommand{\br}{\mathbf{r}}
\newcommand{\fr}{\mathfrak{r}}
\newcommand{\rr}{\mathrm{r}}
\newcommand{\bs}{\mathbf{s}}
\newcommand{\fs}{\mathfrak{s}}
\newcommand{\rs}{\mathrm{s}}
\newcommand{\bt}{\mathbf{t}}
\newcommand{\ft}{\mathfrak{t}}
\newcommand{\rt}{\mathrm{t}}
\newcommand{\bu}{\mathbf{u}}
\newcommand{\fu}{\mathfrak{u}}
\newcommand{\ru}{\mathrm{u}}
\newcommand{\bv}{\mathbf{v}}
\newcommand{\fv}{\mathfrak{v}}
\newcommand{\rv}{\mathrm{v}}
\newcommand{\bw}{\mathbf{w}}
\newcommand{\fw}{\mathfrak{w}}
\newcommand{\rw}{\mathrm{w}}
\newcommand{\bx}{\mathbf{x}}
\newcommand{\fx}{\mathfrak{x}}
\newcommand{\rx}{\mathrm{x}}
\newcommand{\by}{\mathbf{y}}
\newcommand{\fy}{\mathfrak{y}}
\newcommand{\ry}{\mathrm{y}}
\newcommand{\bz}{\mathbf{z}}
\newcommand{\fz}{\mathfrak{z}}
\newcommand{\rz}{\mathrm{z}}
\DeclareMathOperator{\End}{End}
\newcommand{\SL}{\mathfrak{sl}}
\title{On values of $\SL_3$ weight system on chord diagrams whose intersection graph is complete bipartite}
\author{Zhuoke Yang \\ \textsf{zetadkyzk@gmail.com}}
\date{}
\begin{document}
\maketitle 
\begin{abstract}
Each knot invariant can be extended to singular knots according to the skein rule.	
A Vassiliev invariant of order at most $n$ is defined as a knot invariant that vanishes identically on knots with more than $n$ double points. A chord diagram encodes the order of double points along a singular knot. A Vassiliev invariant of order $n$ gives rise to a function on chord diagrams with $n$ chords.  
Such a function should satisfy some conditions in order to come from a Vassiliev invariant. A weight system is a function on chord diagrams that satisfies so-called 4-term relations. Given a Lie algebra $\fg$ equipped with a non-degenerate invariant bilinear form, one can construct a weight system with values in the center of the universal enveloping algebra $U(\fg)$. In this paper, we calculate $\SL_3$ weight system for chord diagram whose intersection graph is complete bipartite graph $K_{2,n}$.
\end{abstract}

\section{Introduction}\label{s1}
Finite order knot invariants, which were introduced in \cite{vassiliev1990cohomology} by V. Vassiliev near 1990, may be expressed in terms of weight systems, that is, functions on chord diagrams satisfying the so-called Vassiliev $4$-term relations. In paper \cite{kon1993}, M. Kontsevich proved that over a field of characteristic zero every weight system corresponds to some finite order invariant. There are multiple approaches to constructing weight systems. In particular, D. Bar-Natan and M. Kontsevtch suggested a construction of a weight system coming from a finite dimensional Lie algebra endowed with an invariant nondegenerate bilinear form. The $\SL_2$ Lie algebra weight system is the simplest case. Its values lie in the center of the universal enveloping algebra of the $\SL_2$ Lie algebra, which, in turn, is isomorphic to the ring of polynomials in one variable (the Casimir element). The $\SL_2$ weight system was studied in many papers. Despite the fact that this weight system can be defined easily, it is difficult to compute its value on a chord diagram using the definition because it is necessary to work with elements of a non-commutative algebra in order to do this. The Chmutov–Varchenko recurrence relations \cite{chmutov1997remarks} simplify these computations significantly. A theorem by S.~Chmutov and S.~Lando \cite{Chmutov2007} states that the value of the $\SL_2$ weight system on a chord diagram depends only on the intersection graph of this chord diagram, i.e. if two chord diagrams have isomorphic intersection graphs, the values of the weight system on these chord diagrams coincide. 

On the other side, we don't have such good properties for
the next reasonable case, namely, for the $\SL_3$ weight system. The $\SL_3$ Lie algebra weight system takes values in the center of the universal enveloping algebra of the $\SL_3$ Lie algebra which is isomorphic to the ring of polynomials in TWO variables (the Casimir elements of degrees 2 and 3). For the $\SL_3$ weight system, we do
not have a result similar to the Chmutov–-Varchenko recurrence relations for $\SL_2$ weight system which could help us to compute its value. The Chmutov--Lando theorem also fails on $\SL_3$ weight system, which means there are two different chord diagrams with different values in $\SL_3$ weight system such that they have isomorphic intersection graphs. 

Our main results concern explicit values of the $\SL_3$ weight system
on chord diagrams whose intersection graph is complete bipartite,
with the size of one part equal to~$2$. In our computations, we
use the results from~\cite{yoshizumi1999some}.
Up to now, the explicit values of the $\SL_3$ weight system has been
known only in few examples and simple series. Our results imply
a nontrivial conclusion that for the chord diagrams whose intersection
graph is the complete bipartite graph $K_{2,n}$, the value
of the $\SL_3$ weight system depends on the second Casimir only.

A key role in our study is played by the Hopf algebra structure
on the space of chord diagrams modulo $4$-term relations introduced
by Kontsevich. Chord diagrams whose intersection graph is complete
bipartite generate a Hopf subalgebra in this Hopf algebra.
By analyzing the structure of this Hopf subalgebra,
P.~Filippova managed in~\cite{Filippova2020,Filippova2021} to deduce
the values of the $\SL_2$ weight system on projections of the
chord diagrams whose intersection graph is complete bipartite
to the subspace of primitives. By combining our computations
with her results, we obtain explicit expressions for the values
on primitives of the $\SL_3$ weight system.

The paper is organized as follows. In Sec.~\ref{s2}, we give definitions of the Hopf algebras of chord diagrams and Lie algebra weight system. In Sec.~\ref{s3}, we show the way to calculate $\SL_3$ weight system and
give some results on small chord diagrams. In Sec.~\ref{s4}, we give the definitions of the Hopf algebras of Jacobi diagrams and calculate some results on Jacobi diagrams. In Sec.~\ref{s5}, we prove the major theorem which is the formula for
the values of the $\SL_3$ weight system on chord diagrams whose intersection graph is complete bipartite and their projections to primitives.

In our presentation, we follow the approach of \cite{lando2013graphs}
to chord diagrams and weight systems, see also \cite{Chmutov2012}.

\section{Hopf algebras of chord diagrams and Lie algebra weight systems}\label{s2}
In this section we define the Hopf algebra of chord diagrams modulo $4$-term relations. 
\begin{definition}[chord diagram]
A {\it chord diagram} $D$ of order $n$ (or degree $n$) is an oriented circle (sometimes termed {\it Wilson loop})
 with a distinguished set of $n$ disjoint pairs of distinct points, considered up to orientation preserving diffeomorphisms of the circle. We denote the set of chords of a chord diagram $D$ by $[D]$.
\end{definition}

The vector space $\cA$ spanned by chord diagrams over complex field $\BC$ is graded, 
$$
\cA=\cA_0\oplus\cA_1\oplus\cA_2\oplus\cA_3\oplus\dots.
$$
 Each component~$\cA_n$ is spanned by diagrams of the same order~$n$.

\begin{definition}[$4$-term elements]
A $4$-term (or $4T$) element is the alternating sum of the following quadruples of diagrams:

\  \  \  \  \  \ 
\begin{tikzpicture}[baseline={([yshift=-.5ex]current bounding box.center)}]
	\draw[dashed] (0,0) circle (1);
	\draw[line width=1pt]  ([shift=( 20:1cm)]0,0) arc [start angle= 20, end angle= 70, radius=1];
	\draw[line width=1pt]  ([shift=(110:1cm)]0,0) arc [start angle=110, end angle=160, radius=1];
	\draw[line width=1pt]  ([shift=(250:1cm)]0,0) arc [start angle=250, end angle=290, radius=1];
	\draw[line width=1pt] (45:1) ..  controls (5:0.3) and (-40:0.3)  .. (280:1);
	\draw[line width=1pt] (135:1) ..  controls (175:0.3) and (220:0.3)  .. (260:1);
\end{tikzpicture}  - 
\begin{tikzpicture}[baseline={([yshift=-.5ex]current bounding box.center)}]
	\draw[dashed] (0,0) circle (1);
	\draw[line width=1pt]  ([shift=( 20:1cm)]0,0) arc [start angle= 20, end angle= 70, radius=1];
	\draw[line width=1pt]  ([shift=(110:1cm)]0,0) arc [start angle=110, end angle=160, radius=1];
	\draw[line width=1pt]  ([shift=(250:1cm)]0,0) arc [start angle=250, end angle=290, radius=1];
	\draw[line width=1pt] (45:1) ..  controls (-5:0.1) and (-50:0.1)  .. (260:1);
	\draw[line width=1pt] (135:1) ..  controls (185:0.1) and (225:0.1)  .. (280:1);
\end{tikzpicture}  + 
\begin{tikzpicture}[baseline={([yshift=-.5ex]current bounding box.center)}]
	\draw[dashed] (0,0) circle (1);
	\draw[line width=1pt]  ([shift=( 20:1cm)]0,0) arc [start angle= 20, end angle= 70, radius=1];
	\draw[line width=1pt]  ([shift=(110:1cm)]0,0) arc [start angle=110, end angle=160, radius=1];
	\draw[line width=1pt]  ([shift=(250:1cm)]0,0) arc [start angle=250, end angle=290, radius=1];
	\draw[line width=1pt] (55:1) ..  controls (5:0.1) and (-40:0.1)  .. (270:1);
	\draw[line width=1pt] (135:1) ..  controls (105:0.4) and (65:0.4)  .. (35:1);
\end{tikzpicture}  - 
\begin{tikzpicture}[baseline={([yshift=-.5ex]current bounding box.center)}]
	\draw[dashed] (0,0) circle (1);
	\draw[line width=1pt]  ([shift=( 20:1cm)]0,0) arc [start angle= 20, end angle= 70, radius=1];
	\draw[line width=1pt]  ([shift=(110:1cm)]0,0) arc [start angle=110, end angle=160, radius=1];
	\draw[line width=1pt]  ([shift=(250:1cm)]0,0) arc [start angle=250, end angle=290, radius=1];
	\draw[line width=1pt] (35:1) ..  controls (0:0.3) and (-45:0.3)  .. (280:1);
	\draw[line width=1pt] (135:1) ..  controls (105:0.5) and (85:0.5)  .. (55:1);
\end{tikzpicture}.
\end{definition}
Here all the four chord diagrams contain, in addition to the two depicted chords,
one and the same set of other chords.
For any vector space $V$, a function $f\in \hom_{linear}(\cA,V)$ 
that vanishes on all $4$-term elements is called a {\it  weight system}.

Now we define the Hopf algebra structure on $\cA/\langle4T\rangle:=\cA^{fr}$.

\begin{definition}
The {\it product} of two chord diagrams $D_1$ and $D_2$ is defined by cutting and gluing the two circles as shown

\begin{tikzpicture}[baseline={([yshift=-.5ex]current bounding box.center)}]
	\draw (0,0) circle (1);
	\draw (1,0) -- (-1,0);
	\fill[black] (1,0) circle (1pt)
                 (-1,0) circle (1pt)
                 (60:1) circle (1pt)
                 (120:1) circle (1pt)
                 (240:1) circle (1pt)
                 (300:1) circle (1pt);
	\draw (60:1)  ..  controls (0,0.2) and (0,-0.2)  .. (300:1);
	\draw (120:1) ..  controls (0,0.2) and (0,-0.2)  .. (240:1);
\end{tikzpicture} $\times$ 
\begin{tikzpicture}[baseline={([yshift=-.5ex]current bounding box.center)}]
	\draw (0,0) circle (1);
	\draw (1,0) -- (-1,0);
	\fill[black] (1,0) circle (1pt)
                 (-1,0) circle (1pt)
                 (60:1) circle (1pt)
                 (120:1) circle (1pt)
                 (240:1) circle (1pt)
                 (300:1) circle (1pt);
	\draw (60:1)  ..  controls (0,0.2) and (0,-0.2)  .. (300:1);
	\draw (120:1) ..  controls (0,0.2) and (0,-0.2)  .. (240:1);
\end{tikzpicture} $=$ 
\begin{tikzpicture}[baseline={([yshift=-.5ex]current bounding box.center)}]
	\draw ([shift=( 20:1cm)]0,0) arc [start angle= 20, end angle= 340, radius=1];;
	\draw (0,1) -- (0,-1);
	\fill[black] (0,1) circle (1pt)
                 (0,-1) circle (1pt)
                 (30:1) circle (1pt)
                 (150:1) circle (1pt)
                 (210:1) circle (1pt)
                 (330:1) circle (1pt);
	\draw (30:1)  ..  controls (0.2,0) and (-0.2,0)  .. (150:1);
	\draw (210:1) ..  controls (-0.2,0) and (0.2,0)  .. (330:1);

	\draw[xshift=2.5cm] ([shift=( 200:1cm)]0,0) arc [start angle= 200, end angle=520, radius=1];
	\draw[xshift=2.5cm] (30:1) -- (210:1);
	\fill[xshift=2.5cm,black] (0,1) circle (1pt)
                 (0,-1) circle (1pt)
                 (30:1) circle (1pt)
                 (150:1) circle (1pt)
                 (210:1) circle (1pt)
                 (330:1) circle (1pt);
	\draw[xshift=2.5cm] (150:1)  ..  controls (190:0.2) and (230:0.2)  .. (0,-1);
	\draw[xshift=2.5cm] (0,1) ..  controls (50:0.2) and (10:0.2)  .. (330:1);
	\draw (20:1) -- ($(20:1)+(0.62,0)$);
	\draw (-20:1) -- ($(-20:1)+(0.62,0)$);
\end{tikzpicture} $=$ 
\begin{tikzpicture}[baseline={([yshift=-.5ex]current bounding box.center)}]
	\draw (0,0) circle (1);
	\draw (  0:1) -- (-90:1)
          ( 30:1) -- (-30:1)
          ( 60:1) -- (-60:1)
          ( 90:1) -- (150:1)
          (120:1) -- (210:1)
          (180:1) -- (240:1);
	\fill[black] (  0:1) circle (1pt)
                 ( 30:1) circle (1pt)
                 ( 60:1) circle (1pt)
                 ( 90:1) circle (1pt)
                 (120:1) circle (1pt)
                 (150:1) circle (1pt)
                 (180:1) circle (1pt)
                 (210:1) circle (1pt)
                 (240:1) circle (1pt)
                 (270:1) circle (1pt)
                 (300:1) circle (1pt)
                 (330:1) circle (1pt);
\end{tikzpicture}.

Modulo $4$-term relationship, the product is well-defined.
\end{definition}
\begin{definition}
The {\it coproduct} in the algebra $\cA^{fr}$
\[
	\delta:\cA_n^{fr}\to \bigoplus_{k+l=n}\cA_k^{fr}\otimes\cA_l^{fr}
\]
is defined as follows. For a diagram $D \in \cA_n^{fr}$ we put
\[
	\delta(D):=\sum_{J\subseteq[D]} D_J\otimes D_{\bar{J}}.
\]
The summation is taken over all subsets $J$ of the set of chords of $D$. Here $D_J$ is the diagram consisting of the chords that belong to $J$ and $\bar{J} = [D] \setminus J$ is the complementary subset of chords. To the entire space $\cA^{fr}$, the operator $\delta$ is extended by linearity.
\end{definition}
\begin{claim} The vector space $A^{fr}$ 
	endowed with the above product and coproduct 
	is a commutative, cocommutative and connected bialgebra.
\end{claim}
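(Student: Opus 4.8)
The plan is to verify the bialgebra axioms for $(\cA^{fr},\cdot,\delta)$ one at a time, keeping separate the purely combinatorial identities from the two genuine subtleties, namely that both operations must be shown to descend to the quotient by $\langle 4T\rangle$. First I would fix the unit and counit: the unit $\eta\colon\BC\to\cA^{fr}$ sends $1$ to the diagram with no chords (the generator of $\cA_0^{fr}$), and the counit $\varepsilon\colon\cA^{fr}\to\BC$ is the projection killing all diagrams of positive order. Connectedness is then immediate, since there is a unique chord diagram of order $0$ and no $4$-term relations live in degree $0$, so $\cA_0^{fr}\cong\BC$.

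For the product I would take its well-definedness modulo $\langle 4T\rangle$ as already granted by the preceding definition (this is exactly where the independence from the choice of cut points enters). Granting that, commutativity follows at once: the two products $D_1\cdot D_2$ and $D_2\cdot D_1$ realize the same cyclic arrangement of chords on the Wilson loop and differ only by a rotation moving the gluing point, hence are equal as chord diagrams; associativity is the analogous statement that the three-fold product places the chords of the three factors in a fixed cyclic order independent of the bracketing. The compatibility axiom, that $\delta$ is an algebra homomorphism, is purely combinatorial: the chords of $D_1\cdot D_2$ are the disjoint union $[D_1]\sqcup[D_2]$, so every subset $J$ factors uniquely as $J_1\sqcup J_2$ with $J_i\subseteq[D_i]$, giving $(D_1 D_2)_J=(D_1)_{J_1}\cdot(D_2)_{J_2}$, and summing yields $\delta(D_1\cdot D_2)=\delta(D_1)\cdot\delta(D_2)$. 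Coassociativity and cocommutativity are equally formal: both iterates of $\delta$ equal $\sum_{J_1\sqcup J_2\sqcup J_3=[D]}D_{J_1}\otimes D_{J_2}\otimes D_{J_3}$, while the involution $J\leftrightarrow\bar J$ on subsets shows that $\delta$ is symmetric.

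The real content, and the step I expect to be the main obstacle, is showing that $\delta$ descends to the quotient, i.e. that $\delta(\langle 4T\rangle)\subseteq\langle 4T\rangle\otimes\cA^{fr}+\cA^{fr}\otimes\langle 4T\rangle$. I would apply $\delta$ to a single $4$-term element, whose four diagrams share a common family of passive chords and differ only in the placement of the two active chords $a,b$, and then group the resulting terms according to the bipartition induced on the passive chords together with the side ($J$ or $\bar J$) to which $a$ and $b$ are assigned. If both active chords land in $J$, then $D_{\bar J}$ is literally the same diagram in all four terms while $\sum\pm D_J$ is again a $4$-term element, so this contribution lies in $\langle 4T\rangle\otimes\cA^{fr}$; the case of both active chords in $\bar J$ is symmetric and lands in $\cA^{fr}\otimes\langle 4T\rangle$. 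The delicate case is when $a$ and $b$ are separated, one in $J$ and one in $\bar J$: here I would inspect the explicit local form of the relation to check that, among the four signed terms, the induced pairs $(D_J,D_{\bar J})$ coincide two-by-two with opposite signs and therefore cancel, so that the mixed case contributes nothing. Assembling the three cases shows that $\delta$ preserves $\langle 4T\rangle$ in the required sense; after this the remaining verifications that $\eta$ and $\varepsilon$ are morphisms and satisfy the unit and counit axioms are routine, completing the proof that $\cA^{fr}$ is a commutative, cocommutative, connected bialgebra.
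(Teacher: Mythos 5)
The paper gives no proof of this claim at all: it is stated as a known fact (due to Kontsevich and Bar-Natan; cf.~\cite{kon1993,bar1995vassiliev,lando2013graphs}), with the only relevant remark being the sentence inside the definition of the product that the product is well defined modulo the $4$-term relations. So there is no paper argument to compare yours against; judged on its own merits, your proof is correct and is essentially the standard one from the literature. The unit, counit, connectedness ($\cA_0^{fr}\cong\BC$), commutativity and associativity via rotation of the glued circle, cocommutativity via $J\leftrightarrow\bar J$, coassociativity, and the compatibility $\delta(D_1\cdot D_2)=\delta(D_1)\cdot\delta(D_2)$ coming from $[D_1\cdot D_2]=[D_1]\sqcup[D_2]$ are all handled correctly as formal or purely combinatorial statements. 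The one step with real content --- that $\delta$ maps the span of $4$-term elements into $\langle 4T\rangle\otimes\cA+\cA\otimes\langle 4T\rangle$ --- is also right, and your case analysis is the correct one: when both active chords fall in $J$ you get a $4T$ element tensored with a fixed diagram; in the mixed case, writing $a$ for the fixed chord and $b$ for the chord whose endpoint slides past the endpoints of $a$, the four terms collapse in pairs (first with second, third with fourth), since deleting $a$ identifies the two positions of $b$'s endpoint on either side of each endpoint of $a$, while deleting $b$ makes all four restricted diagrams equal; the alternating signs then cancel both pairs. That pairwise identification is the only place where the explicit local picture of the relation must be invoked, exactly as you anticipate, so filling in that one picture turns your sketch into a complete proof.
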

\begin{definition}An element $p$ of a bialgebra is called {\it primitive} if $\delta(p) = 1 \otimes p + p \otimes 1$.
\end{definition}
It is easy to show that primitive elements form a 
vector subspace $P(\cA^{fr})$ in the bialgebra $\cA^{fr}$. Since any homogeneous component of a primitive element is primitive, such a vector subspace of a graded bialgebra is also graded $P_n=P(\cA^{fr})\cap\cA_n^{fr}$.
An element of~$\cA_n$ is {\it decomposable} if it can be represented as 
a product of elements of order smaller than~$n$.

\begin{theorem}[\cite{lando2000hopf,schmitt1994incidence}]
The projection $\pi(D)$ of a graph $D$ to the subspace of primitive elements whose kernel is the subspace spanned by decomposable elements in the Hopf algebra $D$ is given by the formula
\begin{align*}
	\pi(D) &= D-1!\sum_{[D_1]\sqcup [D_2]=[D]}D_1\cdot D_2 +2! \sum_{[D_1]\sqcup [D_2]\sqcup[D_3]=[D]}D_1\cdot D_2\cdot D_3\dots\\
	&= D-\sum_{i=2}^{|[D]|}(-1)^i(i-1)!\sum_{\substack{\bigsqcup\limits_{j=1}^i [D_j]=[D]\\ [D_j]\ne \emptyset}}\prod_{j=1}^i D_j
\end{align*}
\end{theorem}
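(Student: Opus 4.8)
The plan is to recognize the operator $\pi$ as the \emph{convolution logarithm of the identity} and to exploit the commutativity and cocommutativity of $\cA^{fr}$ as two separate, dual ingredients. Throughout I restrict attention to a diagram $D$ of positive degree, so that the augmentation $\epsilon(D)=0$ and the leading term of the formula is genuinely $D$; the statement concerns the augmentation ideal $J=\bigoplus_{n\ge1}\cA_n^{fr}$, on which primitivity and decomposability live. First I would reinterpret the inner sums, which run over \emph{unordered} splittings $[D]=[D_1]\sqcup\cdots\sqcup[D_i]$ into nonempty blocks, in terms of the convolution product $*$ on $\hom(\cA^{fr},\cA^{fr})$. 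Writing $u$ for the unit and $\epsilon$ for the counit, and setting $e=\mathrm{id}-u\epsilon$, the $i$-fold convolution power $e^{*i}(D)$ is the sum over \emph{ordered} decompositions into $i$ nonempty blocks; since each unordered splitting into $i$ blocks arises from exactly $i!$ ordered ones and the product is commutative, passing to unordered blocks replaces $\tfrac1i\,e^{*i}$ by the factor $i!/i=(i-1)!$, giving
\[
\pi=\sum_{i\ge1}\frac{(-1)^{i-1}}{i}\,e^{*i}=\log^{*}(\mathrm{id}).
\]
This series is finite on each graded component because a degree-$n$ diagram admits no decomposition into more than $n$ nonempty blocks, and division by $i$ is legitimate as we work over $\BC$.

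With $\pi=\log^{*}(\mathrm{id})$ in hand, the identity on primitives is immediate: if $p$ is primitive its reduced coproduct vanishes, so $e^{*i}(p)=0$ for $i\ge2$ and $\pi(p)=e(p)=p$. Thus $\pi$ restricts to the identity on $P(\cA^{fr})$.

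The heart of the argument, and the step I expect to require the most care, is to show that $\pi$ lands in the primitives and annihilates decomposables. I would derive both from two dual ``transport'' observations about the convolution algebra. Since $\cA^{fr}$ is a bialgebra, the multiplication $m$ is a coalgebra morphism and the comultiplication $\delta$ is an algebra morphism; hence precomposition with $m$ and postcomposition with $\delta$ are homomorphisms of the relevant convolution algebras and therefore commute with $\log^{*}$. Applying the first to $\mathrm{id}$ and decomposing $m=m_1*m_2$ with $m_1(x\otimes y)=x\,\epsilon(y)$ and $m_2(x\otimes y)=\epsilon(x)\,y$ (these commute thanks to commutativity of $\cA^{fr}$), I would obtain the infinitesimal-character identity
\[
\pi(xy)=\pi(x)\,\epsilon(y)+\epsilon(x)\,\pi(y),
\]
which for $x,y\in J$ yields $\pi(xy)=0$, so $\pi$ kills decomposables. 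Dually, decomposing $\delta=\lambda_1*\lambda_2$ with $\lambda_1(x)=x\otimes1$ and $\lambda_2(x)=1\otimes x$ (which commute thanks to cocommutativity), I would obtain
\[
\delta(\pi(x))=\pi(x)\otimes1+1\otimes\pi(x),
\]
so that every value $\pi(x)$ is primitive.

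Finally I would assemble these facts. As $\pi(x)$ is primitive and $\pi$ fixes primitives, $\pi$ is idempotent with image exactly $P(\cA^{fr})$. For the kernel, each $e^{*i}$ with $i\ge2$ factors through $m$ and so lands among products of at least two elements of $J$; hence $(\mathrm{id}-\pi)(J)$ lies in the span of decomposables, while the infinitesimal-character identity gives the reverse inclusion. Thus $\ker\pi$ is precisely the span of decomposable elements, $\pi$ is the asserted projection, and as a by-product one recovers the splitting $J=P(\cA^{fr})\oplus\langle\text{decomposables}\rangle$. As an elementary alternative to the transport lemmas, one can invoke the characteristic-zero structure isomorphism $\cA^{fr}\cong S\!\left(P(\cA^{fr})\right)$ and compute $\pi$ on a monomial $p_1\cdots p_m$ of primitives directly: counting set partitions gives $\pi(p_1\cdots p_m)=\bigl(\sum_{i=1}^m(-1)^{i-1}(i-1)!\,S(m,i)\bigr)p_1\cdots p_m$, where $S(m,i)$ are the Stirling numbers of the second kind, and the generating-function identity $\sum_{m}\bigl(\sum_{i}(-1)^{i-1}(i-1)!\,S(m,i)\bigr)\tfrac{x^m}{m!}=\log(e^{x})=x$ shows the bracketed coefficient is $1$ for $m=1$ and $0$ for $m\ge2$, recovering the claim.
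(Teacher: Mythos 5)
Your proposal is correct, but there is nothing in the paper to compare it against: the paper states this theorem as a known result, citing Lando and Schmitt, and never proves it — it is used as a black box to define projections onto primitives. Your argument is a complete, self-contained proof along the standard lines of those references: you identify the right-hand side with the convolution logarithm $\log^{*}(\mathrm{id})=\sum_{i\ge 1}\tfrac{(-1)^{i-1}}{i}e^{*i}$ (the bookkeeping of ordered versus unordered splittings, $i!$ orderings per unordered splitting giving the coefficient $(-1)^{i-1}(i-1)!$, is exactly right, as is the finiteness of the series on each graded piece and the restriction to positive degree); you then get the infinitesimal-character identity $\pi(xy)=\pi(x)\epsilon(y)+\epsilon(x)\pi(y)$ by pushing $\log^{*}$ through precomposition with $m$ (legitimate because $m$ is a coalgebra map, with $m_{1},m_{2}$ commuting by commutativity of $\cA^{fr}$), and primitivity of the values $\delta(\pi(x))=\pi(x)\otimes 1+1\otimes\pi(x)$ by the dual argument through $\delta$ (legitimate because $\delta$ is an algebra map, with $\lambda_{1},\lambda_{2}$ commuting by cocommutativity). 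The final assembly is also sound: $\pi$ fixes primitives and lands in primitives, $\mathrm{id}-\pi$ maps the augmentation ideal $J$ into $J^{2}$, and $\pi$ kills $J^{2}$, so the kernel on $J$ is exactly the span of decomposables. Your alternative closing argument via the isomorphism $\cA^{fr}\cong S(P(\cA^{fr}))$ and the Stirling-number generating function $\log(e^{x})=x$ is also valid, though it invokes the structure theorem for commutative cocommutative connected graded Hopf algebras in characteristic zero, a strictly stronger input than the convolution-algebra computation needs; the first route is preferable precisely because it isolates commutativity and cocommutativity as the only two hypotheses used, each in one of the two dual halves of the proof.
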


For example
\begin{example} The element \\  
\[\pi(\begin{tikzpicture}[baseline={([yshift=-.5ex]current bounding box.center)},scale=0.3]
	\draw (0,0) circle (1);
	\draw (1,0) -- (-1,0);
	\fill[black] (1,0) circle (3pt)
                 (-1,0) circle (3pt)
                 (60:1) circle (3pt)
                 (120:1) circle (3pt)
                 (240:1) circle (3pt)
                 (300:1) circle (3pt);
	\draw (60:1)  ..  controls (0,0.2) and (0,-0.2)  .. (300:1);
	\draw (120:1) ..  controls (0,0.2) and (0,-0.2)  .. (240:1);
\end{tikzpicture})=
\begin{tikzpicture}[baseline={([yshift=-.5ex]current bounding box.center)},scale=0.3]
	\draw (0,0) circle (1);
	\draw (1,0) -- (-1,0);
	\fill[black] (1,0) circle (3pt)
                 (-1,0) circle (3pt)
                 (60:1) circle (3pt)
                 (120:1) circle (3pt)
                 (240:1) circle (3pt)
                 (300:1) circle (3pt);
	\draw (60:1)  ..  controls (0,0.2) and (0,-0.2)  .. (300:1);
	\draw (120:1) ..  controls (0,0.2) and (0,-0.2)  .. (240:1);
\end{tikzpicture}-2
\begin{tikzpicture}[baseline={([yshift=-.5ex]current bounding box.center)},scale=0.3]
	\draw (0,0) circle (1);
	\fill[black] (0:1) circle (3pt)
                 (180:1) circle (3pt)
                 (60:1) circle (3pt)
                 (120:1) circle (3pt)
                 (210:1) circle (3pt)
                 (330:1) circle (3pt);
	\draw (60:1)--(180:1);
	\draw (0:1)--(120:1);
	\draw (210:1)--(330:1);
\end{tikzpicture}+
\begin{tikzpicture}[baseline={([yshift=-.5ex]current bounding box.center)},scale=0.3]
	\draw (0,0) circle (1);
	\draw (1,0) -- (-1,0);
	\fill[black] (1,0) circle (3pt)
                 (-1,0) circle (3pt)
                 (45:1) circle (3pt)
                 (135:1) circle (3pt)
                 (225:1) circle (3pt)
                 (315:1) circle (3pt);
	\draw (45:1)--(135:1);
	\draw (225:1)--(315:1);
\end{tikzpicture}
\]
is a primitive element, which is the projection of the argument
in the left-hand side to the subspace of primitives.
\end{example}
Given a Lie algebra $\fg$ equipped with a non-degenerate invariant bilinear form, one can construct a weight system with values in the center of its universal enveloping algebra $U(\fg)$.  These construction is due to M. Kontsevich~\cite{kon1993} and D. Bar-Natan~\cite{bar1995vassiliev}.
\begin{definition}[Universal Lie algebra weight systems]Kontsevich’s construction proceeds as follows. Let $\fg$ be a metrized Lie algebra over $\mathbb{R}$ or $\mathbb{C}$, that is, a Lie algebra with an ad-invariant non-degenerate bilinear form $\langle\cdot,\cdot\rangle$. 
	Let~$m$ denote the dimension of~$\fg$. Choose a basis $e_1 ,\dots,e_m$ of $\fg$ and let $e_1^* ,\dots,e_m^*$ be the dual basis with respect to the form $\langle\cdot,\cdot\rangle$. 

Given a chord diagram $D$ with $n$ chords, we first choose a base point on the circle, away from the ends of the chords of $D$. This gives a linear order on the endpoints of the chords, increasing in the positive direction of the Wilson loop. Assign to each chord $a$ an index, that is, an integer-valued variable, $i_a$. The values of $i_a$ will range from $1$ to $m$, the dimension of the Lie algebra. Mark the first endpoint of the chord with the symbol $e_{i_a}$ and the second endpoint with $e_{i_a}^{*}$.

Now, write the product of all the $e_{i_a}$ and all the $e_{i_a}^{*}$ , in the order in which they appear on the Wilson loop of $D$, and take the sum of the $m^n$ elements of the universal enveloping algebra $U(\fg)$ obtained by substituting all possible values of the indices $i_a$ into this product. Denote by $\phi_\fg (D)$ the resulting element of $U(\fg)$.
\end{definition}

\begin{claim}The above construction has the following properties:
\begin{enumerate}
\item the element $\phi_\fg (D)$ does not depend on the choice of the base point on the diagram;
\item it does not depend on the choice of the basis ${e_i }$ of the Lie algebra;
\item it belongs to the ad-invariant subspace
\[
	U(\fg)^{\fg}=\{x\in U(\fg)|xy=yx \text{ for all } y\in \fg\}=ZU(\fg).
\]
\item This map from chord diagrams to $ZU(\fg)$ satisfies the 4-term relations. Therefore, it extends to a homomorphism of commutative algebras
$\cA^{fr}\to ZU(\fg)$.
\end{enumerate}
\end{claim}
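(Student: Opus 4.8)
The plan is to reduce all four assertions to elementary properties of the single tensor on which the construction depends, the \emph{Casimir two-tensor} $c=\sum_{i=1}^{m}e_i\otimes e_i^{*}\in\fg\otimes\fg$. I would first record that $c$ is (i) independent of the chosen basis, since passing to a new basis transforms $\{e_i\}$ and $\{e_i^{*}\}$ by mutually inverse-transpose matrices whose effects cancel; (ii) symmetric, $\sum_i e_i\otimes e_i^{*}=\sum_i e_i^{*}\otimes e_i$, because $\langle\cdot,\cdot\rangle$ is symmetric; and (iii) $\mathrm{ad}$-invariant, meaning $\sum_i\big([y,e_i]\otimes e_i^{*}+e_i\otimes[y,e_i^{*}]\big)=0$ for all $y\in\fg$, which is the infinitesimal form of the invariance of $\langle\cdot,\cdot\rangle$. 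Since the recipe places a copy of $c$ on each chord and then multiplies the $2n$ resulting factors of $\fg$ in the circular order dictated by the base point, property~2 is immediate from~(i): $\phi_\fg(D)$ is manifestly a function of $c$ alone.

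Properties~3 and~1 would both follow from one computation. For property~3, I would evaluate $[y,\phi_\fg(D)]$ for arbitrary $y\in\fg$. Because $\mathrm{ad}_y$ is a derivation of $U(\fg)$, commuting $y$ through the ordered product distributes over the $2n$ letters, and after summing over all indices the terms regroup by chord: the two letters $e_{i_c},e_{i_c}^{*}$ of a chord $c$ contribute the image of $\sum_{i_c}\big([y,e_{i_c}]\otimes e_{i_c}^{*}+e_{i_c}\otimes[y,e_{i_c}^{*}]\big)$ under the linear map that inserts its two arguments at the two positions of those letters while leaving the surrounding (index-independent) letters fixed; by~(iii) this image is $0$, so $[y,\phi_\fg(D)]=0$ and $\phi_\fg(D)\in ZU(\fg)$. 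For property~1 I would move the base point past a single endpoint, say the opening leg $e_{i_a}$ of a chord $a$. After the move the two legs of $a$ exchange their opening/closing roles, and using the symmetry~(ii) to rename the summation index one finds that the value changes by exactly $\sum_{i_a}\big[e_{i_a},\,P\,e_{i_a}^{*}\,Q\big]$, where $P,Q$ are the two index-independent words flanking the partner leg. Expanding this commutator by the Leibniz rule, the term hitting the partner leg carries $\sum_i[e_i,e_i^{*}]=0$ (a consequence of~(ii) after multiplication), while the two letters of every other chord $b$ together contribute the image of $\sum_{i_b}\big([e_{i_a},e_{i_b}]\otimes e_{i_b}^{*}+e_{i_b}\otimes[e_{i_a},e_{i_b}^{*}]\big)$, which vanishes by~(iii) applied with $y=e_{i_a}$. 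Hence moving the base point leaves $\phi_\fg(D)$ unchanged.

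For property~4 I would observe that the four chord diagrams of a $4T$ element differ only by transpositions of two adjacent chord endpoints on the Wilson loop. Since transposing adjacent letters sends $e_ie_j$ to $e_je_i$ and $e_ie_j-e_je_i=[e_i,e_j]$, each of the two paired differences collapses to a single bracket insertion at the place of the transposition; matching the two insertions and using the antisymmetry $[e_i,e_j]=-[e_j,e_i]$ (sliding them to a common position by~(iii) if necessary) shows that the alternating sum $\phi_\fg(D_1)-\phi_\fg(D_2)+\phi_\fg(D_3)-\phi_\fg(D_4)$ is zero. Vanishing on $4T$ elements lets $\phi_\fg$ descend to $\cA^{fr}$. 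Multiplicativity is then geometric: the product of two chord diagrams glues their Wilson loops, hence concatenates the two ordered words, so $\phi_\fg(D_1\cdot D_2)=\phi_\fg(D_1)\,\phi_\fg(D_2)$, where property~1 guarantees independence of the gluing point. As the target $ZU(\fg)$ is commutative, $\phi_\fg$ is a homomorphism of commutative algebras $\cA^{fr}\to ZU(\fg)$.

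The recurring obstacle, present in properties~1, 3 and~4 alike, is that the endpoints one wishes to manipulate are interleaved along the Wilson loop with the endpoints of all the other chords, so the tensor identities~(iii) and the antisymmetry of the bracket are not literally visible inside the ambient word of $U(\fg)$. The device I would use throughout is that any letters coming from chords other than the one being summed do not involve the relevant index and may be factored out of the sum; this turns each local step into the application of a fixed multilinear placement map to a tensor identity that is already known to vanish. The most delicate bookkeeping is the relabelling in property~1: when the base point crosses an endpoint, the opening and closing roles of that chord's two legs are interchanged, and it is precisely the symmetry~(ii) of $c$ that absorbs this switch and keeps the two expressions comparable.
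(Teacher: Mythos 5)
The paper does not actually prove this Claim: it is stated as a known fact, with the construction attributed to Kontsevich~\cite{kon1993} and Bar-Natan~\cite{bar1995vassiliev}, so there is no in-paper argument to compare against. Your proposal is correct and coincides with the standard proof in that literature (see also Chmutov--Duzhin--Mostovoy~\cite{Chmutov2012}): everything is reduced to the three properties of the Casimir two-tensor $c=\sum_i e_i\otimes e_i^{*}$ --- basis independence, symmetry, and infinitesimal ($\mathrm{ad}$-) invariance --- applied through index-independent ``placement maps,'' which is exactly the right device for handling the interleaving of the other chords' endpoints. One small point of emphasis in your treatment of property~4: after the two paired differences collapse to bracket insertions, these insertions sit at the \emph{two different} endpoints of the fixed chord, with the bracketed letters being $e_{i_a}$ at one end and $e_{i_a}^{*}$ at the other; what makes them cancel is precisely the invariance identity~(iii), $\sum_{i_a}\bigl([y,e_{i_a}]\otimes e_{i_a}^{*}+e_{i_a}\otimes[y,e_{i_a}^{*}]\bigr)=0$ with $y=e_{i_b}$, not the antisymmetry of the bracket, which by itself cannot relate insertions at two distinct positions. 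Since you do invoke~(iii) there (if parenthetically), the argument stands; I would only suggest promoting it from an afterthought to the stated reason.
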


\begin{remark}
If $D$ is a chord diagram with $n$ chords, then
\[
	\phi_\fg (D)= c^n + \{\text{terms of degree less than } 2n \text{ in }U(g)\},
\]
where $c=e_1\otimes e_1^*+\dots+e_m\otimes e_m^*\in U(\fg)$ is the quadratic Casimir element. Indeed, we can permute the endpoints of chords on the circle without changing the highest term of $\phi_\fg (D)$ since all the additional summands arising as commutators have degrees smaller than $2n$. Therefore, the highest degree term of $\phi_\fg (D)$ does not depend on $D$ with a given number~$n$ of chords. Finally, if $D$ is a diagram with $n$ isolated chords, that is, the $n$th power of the diagram with one chord, then $\phi_\fg (D) = c^n $.
\end{remark}

\section{The $\SL_3$ weight system}\label{s3}
In this section, we concentrate on the weight
system associated to the Lie algebra $\SL_3$.

\begin{definition}[Weight systems associated with representations]
A linear representation $T : \fg \to \End(V)$ extends to a homomorphism of associative algebras $U(T) : U(\fg) \to \End(V )$. The composition of following three maps (with the last map being the trace)
\[
	\cA\xrightarrow{\phi_\fg} U(\fg)\xrightarrow{U(T)} \End(V)\xrightarrow{Tr} \mathbb{C}
\]
by definition gives the {\it weight system associated with the representation}
\[
	\phi_\fg^{T}=Tr\circ U(T) \circ \phi_\fg
\]
\end{definition}

Consider the standard representation of the Lie algebra $\SL_3$ 
as the space of $3\times3$ matrices with zero trace. It is an eight-dimensional Lie algebra spanned by the matrices
\begin{align*}
&E_1 = \begin{pmatrix}0& 1& 0\\ 0& 0& 0\\ 0& 0& 0\end{pmatrix},&
&E_2 = \begin{pmatrix}0& 0& 0\\ 0& 0& 1\\ 0& 0& 0\end{pmatrix},&
&E_3 = \begin{pmatrix}0& 0& 1\\ 0& 0& 0\\ 0& 0& 0\end{pmatrix},&
&H_1 = \begin{pmatrix}1& 0& 0\\ 0& -1& 0\\ 0& 0& 0\end{pmatrix},&\\
&F_1 = \begin{pmatrix}0& 0& 0\\ 1& 0& 0\\ 0& 0& 0\end{pmatrix},&
&F_2 = \begin{pmatrix}0& 0& 0\\ 0& 0& 0\\ 0& 1& 0\end{pmatrix},&
&F_3 = \begin{pmatrix}0& 0& 0\\ 0& 0& 0\\ 1& 0& 0\end{pmatrix},&
&H_2 = \begin{pmatrix}0& 0& 0\\ 0& 1& 0\\ 0& 0& -1\end{pmatrix}&
\end{align*}
with the commutators
\begin{align*}
&[E_i,F_j]=\delta_{ij}H_i,& &[H_i,H_j]=0,& &[H_i,E_i]=2E_i,& &[H_i,F_i]=-2F_i,& \\
&[H_1,E_2]=-E_2,& &[H_2,E_1]=-E_1,& &[H_2,E_3]=E_3,& &[H_1,E_3]=E_3,& \\
&[H_1,F_2]=F_2,& &[H_2,F_1]=F_1,& &[H_2,F_3]=-F_3,& &[H_1,F_3]=-F_3& \\
\end{align*}
We shall use the symmetric bilinear form $\langle x,y\rangle = Tr(xy)$:
\begin{align*}
&\langle E_i,E_j\rangle =0,& &\langle F_i,F_j\rangle =0,& &\langle H_i,E_j\rangle =0,& &\langle H_i,F_j\rangle =0,& \\
&\langle E_i,F_j\rangle =\delta_{ij},& &\langle H_i,H_i\rangle =2,& &\langle H_1,H_2\rangle =-1& & &
\end{align*}
One can easily check that it is ad-invariant and non-degenerate. The corresponding dual basis is
\[
	H_1^*=\frac{2}{3}H_1+\frac{1}{3}H_2, H_2^*=\frac{1}{3}H_1+\frac{2}{3}H_2, E_i^*=F_i, F_i^*=E_i
\]
and, hence
\begin{align*}
\phi_{\SL_3}(\begin{tikzpicture}[baseline={([yshift=-.5ex]current bounding box.center)},scale=0.3]
	\draw (0,0) circle (1);
	\draw (1,0) -- (-1,0);
	\fill[black] (1,0) circle (3pt)
                 (-1,0) circle (3pt);
\end{tikzpicture})&=c_2=\sum_i e_ie_i^* =\frac{2}{3}H_1^2+\frac{2}{3}H_2^2+\frac{1}{3}(H_1H_2+H_2H_1)+\sum_{i=1}^{3}(E_iF_i+F_iE_i) \\
\phi^{St}_{\SL_3}(\begin{tikzpicture}[baseline={([yshift=-.5ex]current bounding box.center)},scale=0.3]
	\draw (0,0) circle (1);
	\draw (1,0) -- (-1,0);
	\fill[black] (1,0) circle (3pt)
                 (-1,0) circle (3pt);
\end{tikzpicture})&=Tr(\frac{8}{3}\times id_3)=8
\end{align*}
In addition, 
\begin{align*}
\phi^{St}_{\SL_3}(\begin{tikzpicture}[baseline={([yshift=-.5ex]current bounding box.center)},scale=0.3]
	\draw (0,0) circle (1);
	\draw ( 45:1) -- (225:1);
	\draw (135:1) -- (315:1);
	\fill[black] ( 45:1)  circle (3pt)
                 (135:1)  circle (3pt)
                 (225:1)  circle (3pt)
                 (315:1)  circle (3pt);
\end{tikzpicture})&=Tr(\sum_i e_ie_je_i^*e_J^*) =Tr(-\frac{8}{9}\times id_3)=-\frac{8}{3} \end{align*}
Indeed,
\begin{align*}
\phi_{\SL_3}(\begin{tikzpicture}[baseline={([yshift=-.5ex]current bounding box.center)},scale=0.3]
	\draw (0,0) circle (1);
	\draw ( 45:1) -- (225:1);
	\draw (135:1) -- (315:1);
	\fill[black] ( 45:1)  circle (3pt)
                 (135:1)  circle (3pt)
                 (225:1)  circle (3pt)
                 (315:1)  circle (3pt);
\end{tikzpicture})&=(c_2-\lambda)c_2,
\end{align*}
and we have $(\frac{8}{3}-\lambda)\frac{8}{3}=-\frac{8}{9}$, then $\lambda=3$. So
$\phi_{\SL_3}(\begin{tikzpicture}[baseline={([yshift=-.5ex]current bounding box.center)},scale=0.3]
	\draw (0,0) circle (1);
	\draw ( 45:1) -- (225:1);
	\draw (135:1) -- (315:1);
	\fill[black] ( 45:1)  circle (3pt)
                 (135:1)  circle (3pt)
                 (225:1)  circle (3pt)
                 (315:1)  circle (3pt);
\end{tikzpicture})=(c_2-3)c_2$.

Because of the remark in the end of the previous section,  the degree of a Lie algebra weight system of chord diagram with $n$ chords is at most $2n$. By computing the results 
for sufficiently many irreducible representations, 
we can reconstruct the value of the universal weight system. 
Here are some results for the Lie algebra $\SL_3$ in small orders.
\[ 
\begin{array}{c||c|r|r}
  &\thead{\text{intersection} \\\text{graph}} &\SL_3 \text{ weight system \ \ \ \ \ \ \ \ \ \ \ \ \ \ }& \thead{\SL_3\text{ weight system of }\\\text{ the projection in} \\\text{ primitive space}} \\ \hline
\makecell{\gape{\begin{tikzpicture}
	\draw (0,0) circle (6pt);
	\draw (0:6pt) -- (180:6pt);
	\fill[black] (0:6pt) circle (1pt)
	           (180:6pt) circle (1pt);
\end{tikzpicture}}}&K_1& c_2 & c_2  \\ \hline
\makecell{\gape{\begin{tikzpicture}
	\draw (0,0) circle (6pt);
	\draw ( 45:6pt) -- (225:6pt)
          (135:6pt) -- (315:6pt);
	\fill[black] ( 45:6pt) circle (1pt)
                 (135:6pt) circle (1pt)
                 (225:6pt) circle (1pt)
                 (315:6pt) circle (1pt);
\end{tikzpicture}}}&K_{2}& c_2(c_2-3) &  -3c_2  \\ \hline
\makecell{\gape{\begin{tikzpicture}
	\draw (0,0) circle (6pt);
	\draw (  0:6pt) -- (180:6pt)
          ( 60:6pt) -- (300:6pt)
          (120:6pt) -- (240:6pt);
	\fill[black] (  0:6pt) circle (1pt)
                 ( 60:6pt) circle (1pt)
                 (120:6pt) circle (1pt)
                 (180:6pt) circle (1pt)
                 (240:6pt) circle (1pt)
                 (300:6pt) circle (1pt);
\end{tikzpicture}}}&K_{1,2}& c_2(c_2-3)^2 & 9c_2  \\ \hline
\makecell{\gape{\begin{tikzpicture}
	\draw (0,0) circle (6pt);
	\draw (  0:6pt) -- (180:6pt)
          ( 60:6pt) -- (240:6pt)
          (120:6pt) -- (300:6pt);
	\fill[black] (  0:6pt) circle (1pt)
                 ( 60:6pt) circle (1pt)
                 (120:6pt) circle (1pt)
                 (180:6pt) circle (1pt)
                 (240:6pt) circle (1pt)
                 (300:6pt) circle (1pt);
\end{tikzpicture}}}&K_{3}& c_2(c_2-3)(c_2-6) & 18c_2  \\ \hline
\makecell{\gape{\begin{tikzpicture}
	\draw (0,0) circle (6pt);
	\draw ( 20:6pt) -- (160:6pt)
          ( 70:6pt) -- (290:6pt)
          (110:6pt) -- (250:6pt)
          (200:6pt) -- (340:6pt);
	\fill[black] ( 20:6pt) circle (1pt)
                 ( 70:6pt) circle (1pt)
                 (110:6pt) circle (1pt)
                 (160:6pt) circle (1pt)
                 (200:6pt) circle (1pt)
                 (250:6pt) circle (1pt)
                 (290:6pt) circle (1pt)
                 (340:6pt) circle (1pt);
\end{tikzpicture}}}&K_{2,2}& c_2(c_2^3-12c_2^2+63c_2-99) &  c_2(9c_2-99) \\ \hline
\makecell{\gape{\begin{tikzpicture}
	\draw (0,0) circle (6pt);
	\draw ( 20:6pt) -- (160:6pt)
          ( 70:6pt) -- (250:6pt)
          (110:6pt) -- (290:6pt)
          (200:6pt) -- (340:6pt);
	\fill[black] ( 20:6pt) circle (1pt)
                 ( 70:6pt) circle (1pt)
                 (110:6pt) circle (1pt)
                 (160:6pt) circle (1pt)
                 (200:6pt) circle (1pt)
                 (250:6pt) circle (1pt)
                 (290:6pt) circle (1pt)
                 (340:6pt) circle (1pt);
\end{tikzpicture}}}&K_{4}\backslash \{e\}& c_2(c_2^3-15c_2^2+81c_2-126) &  c_2(9c_2-126) \\ \hline
\makecell{\gape{\begin{tikzpicture}
	\draw (0,0) circle (6pt);
	\draw (  0:6pt) -- (180:6pt)
          ( 45:6pt) -- (225:6pt)
          ( 90:6pt) -- (270:6pt)
          (135:6pt) -- (315:6pt);
	\fill[black] (  0:6pt) circle (1pt)
                 ( 45:6pt) circle (1pt)
                 ( 90:6pt) circle (1pt)
                 (135:6pt) circle (1pt)
                 (180:6pt) circle (1pt)
                 (225:6pt) circle (1pt)
                 (270:6pt) circle (1pt)
                 (315:6pt) circle (1pt);
\end{tikzpicture}}}&K_{4}& c_2(c_2^3-18c_2^2+108c_2-180) & c_2(9c_2-180)   \\ \hline
\makecell{\gape{\begin{tikzpicture}
	\draw (0,0) circle (6pt);
	\draw ( 20:6pt) -- (160:6pt)
          ( 60:6pt) -- (300:6pt)
          (120:6pt) -- (240:6pt)
          (200:6pt) -- (340:6pt)
          ( 90:6pt) -- (270:6pt);
	\fill[black] ( 20:6pt) circle (1pt)
                 ( 60:6pt) circle (1pt)
                 (120:6pt) circle (1pt)
                 (160:6pt) circle (1pt)
                 (200:6pt) circle (1pt)
                 (240:6pt) circle (1pt)
                 (300:6pt) circle (1pt)
                 (340:6pt) circle (1pt)
                 ( 90:6pt) circle (1pt)
                 (270:6pt) circle (1pt);
\end{tikzpicture}}}&K_{2,3}& c_2(c_2^4-18c_2^3+162c_2^2-648c_2+873) & -c_2(135c_2-873)   \\ \hline
\makecell{\gape{\begin{tikzpicture}
	\draw (0,0) circle (6pt);
	\draw ( 36:6pt) -- (36+180:6pt)
          ( 72:6pt) -- (72+180:6pt)
          (108:6pt) -- (108+180:6pt)
          (144:6pt) -- (144+180:6pt)
          (  0:6pt) -- (180:6pt);
	\fill[black] (  0:6pt) circle (1pt)
                 ( 36:6pt) circle (1pt)
                 ( 72:6pt) circle (1pt)
                 (108:6pt) circle (1pt)
                 (144:6pt) circle (1pt)
                 (180:6pt) circle (1pt)
                 (216:6pt) circle (1pt)
                 (252:6pt) circle (1pt)
                 (288:6pt) circle (1pt)
                 (324:6pt) circle (1pt);
\end{tikzpicture}}}&K_{5}& c_2(c_2^4-30c_2^3+360c_2^2-1764c_2+2664) & -c_2(324c_2-2664)   \\ \hline
\makecell{\gape{\begin{tikzpicture}
	\draw (0,0) circle (6pt);
	\draw ( 15:6pt) -- (165:6pt)
          ( 45:6pt) -- (-45:6pt)
          ( 75:6pt) -- (-75:6pt)
          (105:6pt) -- (-105:6pt)
          (135:6pt) -- (-135:6pt)
          (-15:6pt) -- (195:6pt);
	\fill[black] ( 15:6pt) circle (1pt)
                 ( 45:6pt) circle (1pt)
                 ( 75:6pt) circle (1pt)
                 (105:6pt) circle (1pt)
                 (135:6pt) circle (1pt)
                 (165:6pt) circle (1pt)
                 (195:6pt) circle (1pt)
                 (-15:6pt) circle (1pt)
                 (-45:6pt) circle (1pt)
                 (-75:6pt) circle (1pt)
                 (-105:6pt) circle (1pt)
                 (-135:6pt) circle (1pt);
\end{tikzpicture}}}&K_{2,4}& \makecell[r]{c_2(c_2^5 - 24c_2^4 + 306c_2^3 - \\ 1998c_2^2 + 6273c_2 - 7227)} & c_2(1485c_2-7227) \\ \hline
\makecell{\gape{\begin{tikzpicture}
	\draw (0,0) circle (6pt);
	\draw (  0:6pt) -- (180:6pt)
          ( 30:6pt) -- (150:6pt)
          ( 60:6pt) -- (300:6pt)
          ( 90:6pt) -- (270:6pt)
          (120:6pt) -- (240:6pt)
          (210:6pt) -- (330:6pt);
	\fill[black] (  0:6pt) circle (1pt)
                 ( 30:6pt) circle (1pt)
                 ( 60:6pt) circle (1pt)
                 ( 90:6pt) circle (1pt)
                 (120:6pt) circle (1pt)
                 (150:6pt) circle (1pt)
                 (180:6pt) circle (1pt)
                 (210:6pt) circle (1pt)
                 (240:6pt) circle (1pt)
                 (270:6pt) circle (1pt)
                 (300:6pt) circle (1pt)
                 (330:6pt) circle (1pt);
\end{tikzpicture}}}&K_{3,3}&\makecell[r]{\frac{3}{4}c_3^2+ c_2(c_2^5 - 27c_2^4 +\\ 405c_2^3 - 3339c_2^2 + 13014c_2 - 17595)} & \makecell[r]{\frac{3}{4}c_3^2-c_2(99c_2^2- \\ 4374c_2+17595)} \\ \hline
\makecell{\gape{\begin{tikzpicture}
	\draw (0,0) circle (6pt);
	\draw (  0:6pt) -- (180:6pt)
          ( 30:6pt) -- (210:6pt)
          ( 60:6pt) -- (240:6pt)
          ( 90:6pt) -- (270:6pt)
          (120:6pt) -- (300:6pt)
          (150:6pt) -- (330:6pt);
	\fill[black] (  0:6pt) circle (1pt)
                 ( 30:6pt) circle (1pt)
                 ( 60:6pt) circle (1pt)
                 ( 90:6pt) circle (1pt)
                 (120:6pt) circle (1pt)
                 (150:6pt) circle (1pt)
                 (180:6pt) circle (1pt)
                 (210:6pt) circle (1pt)
                 (240:6pt) circle (1pt)
                 (270:6pt) circle (1pt)
                 (300:6pt) circle (1pt)
                 (330:6pt) circle (1pt);
\end{tikzpicture}}}&K_{6}&\makecell[r]{ c_3^2 +c_2(c_2^5 - 45c_2^4 + \\ 900c_2^3 - 8826c_2^2 + 38196c_2 - 54288)}& \makecell[r]{c_3^2-c_2(132c_2^2- \\ 10872c_2+54288)} \\ \hline
\end{array}
\]

Here $c_2$, $c_3$ are the Casimir elements,
of degree~$2$ and~$3$, respectively,
 in $ZU(\SL_3)$,
\begin{align*}
c_2 =& \sum_{i=1}^8 e_i e_i^*, \ \ \ \ \ \{e_i\}_{i\in [8]} \text{\  is a basis of\ }\SL_3. \\
c_3 =&-\frac{4}{3}\Big(2H_1H_1H_1-3H_1H_1(H_1 + H_2)-3H_1(H_1 + H_2)(H_1 + H_2) +\\&+ 2(H_1 + H_2)(H_1 + H_2)(H_1 + H_2)+  9E_1F_1H_1-18E_1F_1(H_1 + H_2)-\\& -18E_3F_3H_1 +  9E_3F_3(H_1 + H_2) + 9E_2F_2H_1 +  9E_2F_2(H_1 + H_2)-\\ &-27E_1F_3E_2-27F_1E_3F_2 + 18H_1(H_1 + H_2)-9(H_1 + H_2)(H_1 + H_2)-\\&-18H_1 + 9(H_1 + H_2)\Big)
\end{align*}
The factor $-\frac{4}{3}$ in $c_3$
is due to the requirement that under the standard representation $c_3$ takes value $\frac{80}{3}$, which is the image of the Casimir element $\sum_{i,j,k}e_{ij} e_{jk} e_{ki} - \frac{1}{3}(\sum_{i}e_{ii})^3$ in $ZU(gl_3)$ .

\section{Jacobi diagrams and Lie algebra weight systems}\label{s4}
When computing the values of the $\SL_3$ weight system, we will
require the results in~\cite{yoshizumi1999some} about recurrence
relations for the values of this weight system on Jacobi diagrams.
To this end, we recall the notion of {\it closed Jacobi diagram}. These diagrams provide a better understanding of the primitive space $P\cA$,
see, e.g.~\cite{Chmutov2012}.

\begin{definition}
A {\it closed Jacobi diagram} (or, simply, a {\it closed diagram}) is a connected trivalent graph with a distinguished embedded oriented cycle, called Wilson loop, and a fixed cyclic order of half-edges at each vertex not on the Wilson loop. Half the number of the vertices of a closed diagram is called the degree, or order, of the diagram. This number is always an integer.
\end{definition}

 In the pictures below, we shall always draw the diagram inside its Wilson loop, which will be assumed to be oriented counterclockwise unless explicitly specified otherwise. Inner vertices will also be assumed to be oriented counterclockwise.

Chord diagrams are exactly those closed Jacobi diagrams all of whose vertices lie on the Wilson loop.

\begin{definition}
The vector space of closed diagrams $\cC_n^{STU}$ is the space spanned by all closed diagrams $\cC_n$ of degree $n$ modulo the {\it STU relations}:
\[
\begin{tikzpicture}[baseline={([yshift=-.5ex]current bounding box.center)}]
	\draw[->][line width=1pt]  ([shift=(230:1cm)]0,0) arc [start angle=230, end angle=310, radius=1];
	\draw (270:0.3)--(270:1);
	\draw (270:0.3)--(  0:0.3);
	\draw (270:0.3)--(180:0.3);
	\fill [black] (270:0.3) circle (1pt);
	\node at (1,-1) {S};
\end{tikzpicture} =
\begin{tikzpicture}[baseline={([yshift=-.5ex]current bounding box.center)}]
	\draw[->][line width=1pt]  ([shift=(230:1cm)]0,0) arc [start angle=230, end angle=310, radius=1];
	\draw (  0:0.3) ..  controls (-60:0.3) and (-100:0.5)  .. (280:1);
	\draw (180:0.3) ..  controls (240:0.3) and (280:0.5)  .. (260:1);
	\node at (1,-1) {T};
\end{tikzpicture} 	-
\begin{tikzpicture}[baseline={([yshift=-.5ex]current bounding box.center)}]
	\draw[->][line width=1pt]  ([shift=(230:1cm)]0,0) arc [start angle=230, end angle=310, radius=1];
	\draw (  0:0.3) -- (260:1);
	\draw (180:0.3) -- (280:1);
	\node at (1,-1) {U};
\end{tikzpicture} 	
\]
The three diagrams $S$, $T$ and $U$ must be identical outside the shown fragment. We write $\cC^{STU}$ for the direct sum of the spaces $\cC_n^{STU}$ for all $n \ge 0$.
\end{definition}

Now we shall define a bialgebra structure in the space $\cC^{STU}$.

\begin{definition}
The product of two closed diagrams is defined in the same way as for chord diagrams: the two Wilson loops are cut at arbitrary places and then glued together into one loop, in agreement with the orientations:
\[
\begin{tikzpicture}[baseline={([yshift=-.5ex]current bounding box.center)}]
	\draw (0,0) circle (1)
		  (-0.3,0) circle (0.3);
	\draw (-1, 0) -- (-0.6,-0)
          (0,-0) -- (0.3,-0)
          (0.3,-0) -- (45:1)
          (0.3,-0) -- (-45:1);
	\fill[black] ( 0.3,-0) circle (1pt)
                 (-0.6,-0) circle (1pt)
                 ( 0, 0) circle (1pt)
                 (45:1) circle (1pt)
                 (-45:1) circle (1pt)
                 (-1,-0) circle (1pt);
\end{tikzpicture} \times
\begin{tikzpicture}[baseline={([yshift=-.5ex]current bounding box.center)}]
	\draw (0,0) circle (1);
	\draw ( 0, 0) -- (-30:1)
          ( 0, 0) -- (90:1)
          ( 0, 0) -- (210:1);
	\fill[black] ( 210:1) circle (1pt)
                 ( 0, 0) circle (1pt)
                 (90:1) circle (1pt)
                 (-30:1) circle (1pt);
\end{tikzpicture} = 
\begin{tikzpicture}[baseline={([yshift=-.5ex]current bounding box.center)}]
	\draw (0,0) circle (1)
		  (-0.2,0.5) circle (0.2);
	\draw (150:1) -- (-0.4,0.5)
          (30:1) -- (0,0.5)
          (60:1) -- (0.5,0.5)
          (210:1) -- (-30:1)
          (240:1) -- (-0.5,-0.5);
	\fill[black] (150:1)circle (1pt)
                 (30:1) circle (1pt)
                 (60:1) circle (1pt)
                 (210:1)circle (1pt)
                 (240:1)circle (1pt)
                  (-0.4,0.5)circle (1pt)
                 (0,0.5)circle (1pt)
                 (0.5,0.5)circle (1pt)
                 (-30:1)circle (1pt)
                  (-0.5,-0.5)circle (1pt);
\end{tikzpicture} 
\]
\end{definition}

\begin{definition}
The {\it internal} graph of a closed diagram is the graph obtained by erasing the Wilson loop. A closed diagram is said to be {\it connected} if its internal graph is  connected. The {\it connected components} of a closed diagram are defined as the connected components of its internal graph.
\end{definition}

In the sense of this definition, any chord diagram of order $n$ consists of $n$ connected components — the maximal possible number.

Now, the construction of the coproduct proceeds in the same way as for chord diagrams, the chords being replaced by the
more general connected components.

\begin{definition}
Let $D$ be a closed diagram and $[D]$ the set of its connected components. For any subset $J \subseteq [D]$, denote by $D_J$ the closed diagram with only those components that belong to $J$ and by $D_{\bar{J}}$ the “complementary” diagram $(\bar{J} := [D] \setminus J)$. We set
\[
	\delta(D):=\sum_{J\subseteq[D]} D_J\otimes D_{\bar{J}}
\]
\end{definition}

Now, for each~$n=0,1,2,\dots$, we have a natural inclusion $\lambda : \cA_n \to \cC_n $.

\begin{claim}~\cite{bar1995vassiliev}
The inclusion $\lambda$ gives rise to an isomorphism of bialgebras $\lambda : \cA^{fr} \to \cC^{STU}$. 
\end{claim}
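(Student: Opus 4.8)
The plan is to show that $\lambda$ is a well-defined homomorphism of bialgebras and then to prove separately that it is surjective and injective. For well-definedness I would first check that the $4T$ relation is a consequence of the $STU$ relations, so that the inclusion descends to a map $\cA^{fr}\to\cC^{STU}$; this is a short computation obtained by applying $STU$ to resolve the trivalent vertex of a suitable closed diagram of low degree having a single vertex off the Wilson loop, which produces precisely the alternating sum of four chord diagrams making up the $4T$ element. Compatibility with the algebraic structure is then essentially formal. The product on both sides is defined by cutting and regluing Wilson loops, so $\lambda(D_1\cdot D_2)=\lambda(D_1)\cdot\lambda(D_2)$ by construction; and since the connected components of the internal graph of a chord diagram are exactly its chords, the sum over subsets of chords defining $\delta$ on $\cA^{fr}$ agrees under $\lambda$ with the sum over subsets of connected components defining $\delta$ on $\cC^{STU}$. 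Hence $\lambda$ is a bialgebra homomorphism, and it remains to prove bijectivity.

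For surjectivity I would induct on the number of internal (off-loop) vertices of a closed diagram $D$, showing that $D$ is congruent modulo $STU$ to a linear combination of chord diagrams. The base case is a chord diagram. For the inductive step, observe that whenever $D$ has an internal vertex its internal graph must attach to the Wilson loop (the diagram being connected), so some internal vertex is adjacent to the loop via a leg; applying the $STU$ relation to that vertex rewrites $D$ as a difference of two closed diagrams each with one fewer internal vertex. The induction hypothesis then expresses $D$ modulo $STU$ through chord diagrams, so every class in $\cC^{STU}$ lies in the image of $\lambda$.

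Injectivity is the heart of the matter, and I would establish it by constructing a one-sided inverse $\rho\colon\cC^{STU}\to\cA^{fr}$ from the resolution procedure above: repeatedly resolve a loop-adjacent internal vertex until only chord diagrams remain, and record the resulting class in $\cA^{fr}$. Because $\rho$ fixes chord diagrams, we would get $\rho\circ\lambda=\mathrm{id}_{\cA^{fr}}$ and hence injectivity at once — \emph{provided} $\rho$ is well defined. This well-definedness is exactly the main obstacle: the chord-diagram expansion must be independent of which loop-adjacent vertex is resolved at each stage and of the order of elimination, once one works modulo $4T$. I would handle it by first deducing the antisymmetry (sign change under reversing the cyclic order at a trivalent vertex) and the Jacobi/IHX relations from $STU$, and then running an induction on the number of internal vertices comparing the two expansions obtained by resolving two different loop-adjacent vertices first. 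Using antisymmetry and IHX to slide and recombine the internal edges, each discrepancy between the two expansions should reduce to instances of the $4T$ relation in $\cA^{fr}$; verifying that all discrepancies are indeed $4T$ combinations is the delicate bookkeeping and the genuine difficulty of the proof. Once $\rho$ is known to be well defined modulo $4T$, it respects $STU$ automatically (resolving the distinguished vertex of an $STU$ triple first shows the three terms map to the same class), so it descends to $\cC^{STU}$; combined with surjectivity this yields that $\lambda$ is the asserted isomorphism of bialgebras.
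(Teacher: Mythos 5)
First, note that the paper itself contains no proof of this claim: it is imported verbatim from Bar-Natan \cite{bar1995vassiliev}, so there is no in-paper argument to compare yours against. What you have written is, in outline, the standard proof from Bar-Natan's paper (also presented in Chmutov--Duzhin--Mostovoy): the $4T$ relation follows from $STU$; compatibility with product and coproduct is formal; surjectivity follows by induction on the number of internal vertices, resolving a loop-adjacent trivalent vertex by $STU$ at each step; and injectivity is obtained by showing that this resolution procedure gives a well-defined map $\rho:\cC^{STU}\to\cA^{fr}$ that is a left inverse of $\lambda$. One small imprecision: a single application of $STU$ produces only two terms, so the $4T$ element does not arise from ``applying $STU$ to resolve the trivalent vertex'' of the tripod diagram; it arises as the \emph{difference of two distinct} $STU$-resolutions of that diagram (choosing different legs as the trunk), which is how one sees that $4T$ holds in $\cC^{STU}$.

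As a standalone proof, however, your proposal has a genuine gap exactly where you acknowledge the difficulty: the well-definedness of $\rho$ modulo $4T$ --- independence of the resulting chord-diagram combination from which loop-adjacent vertex is resolved at each stage --- is asserted to follow from ``delicate bookkeeping'' via antisymmetry and IHX, but is not carried out. This step is the entire mathematical content of the theorem; everything else is either formal or an easy induction. It is also not automatic in the way your sketch suggests: the standard argument is a double induction comparing the expansions obtained by first resolving two different loop-adjacent vertices $a$ and $b$, and it splits into cases. When $a$ and $b$ are not joined by an internal edge the two expansions agree term-by-term because the $STU$ applications commute; the essential case is when $a$ and $b$ are adjacent (or share legs), and it is precisely there that the $4T$ relation itself --- not merely AS/IHX --- must be invoked to reconcile the two expansions. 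Without closing that case analysis, what you have is a correct and well-organized roadmap for Bar-Natan's proof rather than a complete argument.
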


By definition, connected closed diagrams are primitive with respect to the coproduct $\delta$. It may sound surprising that the converse is also true:
\begin{claim}\cite{bar1995vassiliev}
The primitive space $P$ of the bialgebra $\cC^{STU}$ coincides with the linear span of connected closed diagrams.
\end{claim}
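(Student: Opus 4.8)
\emph{Proof sketch.} The plan is to prove the two inclusions separately. One direction is immediate: as already observed, a connected closed diagram $D$ has only one connected component, so the only subsets of $[D]$ are $\emptyset$ and $[D]$, whence $\delta(D)=1\otimes D+D\otimes 1$; thus the span $W$ of connected diagrams satisfies $W\subseteq P$. For the reverse inclusion I would exploit the decomposition of the augmentation ideal furnished by the projection onto primitives recalled in Section~\ref{s2}: transported along the isomorphism $\cA^{fr}\cong\cC^{STU}$, that projection $\pi$ is an idempotent with image $P$ and kernel the subspace $\cD$ of decomposable elements, so $\cC^{STU}_+=P\oplus\cD$. Hence it suffices to show that connected diagrams span $\cC^{STU}$ modulo decomposables, i.e. that $\cC^{STU}_+=W+\cD$. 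Combined with $W\subseteq P$ this forces $P=W$ by a one-line argument: given $p\in P$, write $p=w+d$ with $w\in W\subseteq P$ and $d\in\cD$; then $d=p-w\in P\cap\cD=0$, so $p=w\in W$.

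The heart of the matter is therefore the claim that every closed diagram is a polynomial in connected diagrams, which I would prove by induction on the number $c$ of connected components. The case $c=1$ is trivial. For $c\ge 2$ the key tool is a leg-interchange relation read off from the $STU$ relation: applying $STU$ to a trivalent vertex lying on the Wilson loop whose two other edges run to two neighbouring univalent vertices shows that interchanging two adjacent legs on the Wilson loop changes the diagram by $\pm$(the diagram in which those two legs are joined to a single new trivalent vertex). Since $STU$ preserves the degree, this correction has the same degree as $D$; and if the two interchanged legs belonged to \emph{different} components, the new vertex joins those components, so the correction has exactly $c-1$ components. Sorting the legs along the Wilson loop by their component — by adjacent transpositions, interchanging only legs of distinct components and never altering the internal cyclic order of a single component — brings $D$ into block form, which is by definition the product $C_1\cdots C_c$ of its connected components, at the cost of a sum of correction diagrams each having $c-1$ components. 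The product $C_1\cdots C_c$ is a monomial in connected diagrams, and by the induction hypothesis each correction term is a polynomial in connected diagrams; hence so is $D$. Splitting such a polynomial into its linear part (in $W$) and its higher part (products, hence in $\cD$) yields $\cC^{STU}_+=W+\cD$, and the proof is complete.

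I expect the main obstacle to be the second paragraph, namely making the leg-interchange reduction fully rigorous: verifying that an $STU$-interchange of two cross-component legs produces exactly one correction diagram, of the same degree and with one fewer component, and that the block (product) form can be reached using only such cross-component transpositions while preserving each component's internal cyclic order. Once this combinatorial bookkeeping is settled, the remainder is formal, resting only on the isomorphism $\cA^{fr}\cong\cC^{STU}$ and the primitive–decomposable splitting already available from Section~\ref{s2}.
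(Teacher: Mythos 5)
The paper itself offers no proof of this claim---it is quoted verbatim from Bar-Natan \cite{bar1995vassiliev}---so there is no internal argument to compare against; your sketch has to be judged on its own, and it is correct: it is essentially the canonical proof from \cite{bar1995vassiliev} (see also \cite{Chmutov2012}). Both halves are sound as you set them up: connected diagrams are primitive by inspection of $\delta$, and the reverse inclusion follows from the splitting $\cC^{STU}_+=P\oplus\cD$ (available here by transporting the projection formula of Section~\ref{s2} along the bialgebra isomorphism $\lambda:\cA^{fr}\to\cC^{STU}$), once one knows every closed diagram is a polynomial in connected diagrams. The combinatorial step you flag as the main obstacle is indeed where the work lies, but it goes through exactly as you outline: by $STU$, transposing two adjacent legs changes the diagram by a single correction term of the same degree in which the two edges meet at a new internal trivalent vertex attached to the Wilson loop by one edge, so when the legs lie in distinct components the correction has exactly one fewer component; cutting the Wilson loop at a point and running a stable bubble sort on the resulting linear sequence of legs uses only cross-component adjacent transpositions, preserves the internal order of each component's legs, and terminates in block form, which represents the product of the components in $\cC^{STU}$ (the product being well defined there independently of where the loops are cut). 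With that, your induction on the number of components closes the argument.
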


Since every closed diagram is a linear combination of chord diagrams, the weight system $\phi_\fg$ can be treated as a function on $\cC^{STU}$ with values in $U(\fg)$.

The STU relation, which defines the algebra $\cC$, gives us a hint how to do it. Namely, if we assign elements $e_i , e_j$ to the endpoints of chords of the T- and U- diagrams from the STU relations,
\[
\begin{tikzpicture}[baseline={([yshift=-.5ex]current bounding box.center)}]
	\draw[->][line width=1pt]  ([shift=(230:1cm)]0,0) arc [start angle=230, end angle=310, radius=1];
	\draw (  0:0.3) ..  controls (-60:0.3) and (-100:0.5)  .. (280:1);
	\draw (180:0.3) ..  controls (240:0.3) and (280:0.5)  .. (260:1);
	\node at (1,-1) {T};
	\node [below] at ( 0.3,-1) {$e_j$};
	\node [below] at (-0.3,-1) {$e_i$};
	\node [above] at (  0:0.3) {$e_j^*$};
	\node [above] at (180:0.3) {$e_i^*$};
\end{tikzpicture} 	-
\begin{tikzpicture}[baseline={([yshift=-.5ex]current bounding box.center)}]
	\draw[->][line width=1pt]  ([shift=(230:1cm)]0,0) arc [start angle=230, end angle=310, radius=1];
	\draw (  0:0.3) -- (260:1);
	\draw (180:0.3) -- (280:1);
	\node at (1,-1) {U};
	\node [below] at ( 0.3,-1) {$e_i^*$};
	\node [below] at (-0.3,-1) {$e_j^*$};
	\node [above] at (  0:0.3) {$e_j^*$};
	\node [above] at (180:0.3) {$e_i^*$};
\end{tikzpicture}=
\begin{tikzpicture}[baseline={([yshift=-.5ex]current bounding box.center)}]
	\draw[->][line width=1pt]  ([shift=(230:1cm)]0,0) arc [start angle=230, end angle=310, radius=1];
	\draw (270:0.3)--(270:1);
	\draw (270:0.3)--(  0:0.3);
	\draw (270:0.3)--(180:0.3);
	\fill [black] (270:0.3) circle (1pt);
	\node at (1,-1) {S};
	\node [below] at ( 0,-1) {$[e_i,e_j]$};
	\node [above] at (  0:0.3) {$e_j^*$};
	\node [above] at (180:0.3) {$e_i^*$};
\end{tikzpicture}	
\]
then it is natural to assign the commutator $[e_i,e_j]$ to the trivalent vertex on the Wilson loop of the S-diagram.

Generally, $[e_i,e_j]$ may not be a basis vector. A diagram with an endpoint marked by a linear combination of the basis vectors should be understood as the corresponding linear combination of diagrams marked by basis vectors. This understanding implies a useful
\begin{lemma}
The degree of the value of a Lie algebra weight system on a closed diagram $D$ is less or equal than the number of legs of $D$.
\end{lemma}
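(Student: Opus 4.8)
\section*{Proof proposal}

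The plan is to exhibit $\phi_\fg(D)$ as the image, under ordered multiplication into $U(\fg)$, of an invariant tensor that lives in $\fg^{\otimes L}$, where $L$ is the number of legs of $D$ (that is, the univalent vertices of the internal graph, the points at which it meets the Wilson loop). The bound on the degree then drops out immediately from the Poincar\'e--Birkhoff--Witt (PBW) filtration.

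First I would recall the relevant filtration on $U(\fg)$. It carries the canonical increasing filtration $U_0(\fg)\subseteq U_1(\fg)\subseteq U_2(\fg)\subseteq\cdots$, in which $U_k(\fg)$ is spanned by products of at most $k$ elements of $\fg$, and the degree of an element $u\in U(\fg)$ is by definition the least $k$ with $u\in U_k(\fg)$. In particular, any product $x_1x_2\cdots x_L$ with all $x_i\in\fg$ lies in $U_L(\fg)$ and hence has degree at most $L$. This is the only property of the enveloping algebra that the argument will use.

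Next I would unwind the definition of $\phi_\fg$ on a closed diagram, which extends the chord-diagram definition through the STU relation. Concretely, one contracts a copy of the metric tensor $\sum_i e_i\otimes e_i^{*}$ along each edge of the internal graph and the structure tensor at each trivalent vertex; after performing all of these internal contractions one is left with an invariant tensor $T_D\in\fg^{\otimes L}$ carrying exactly one factor for each leg. The value $\phi_\fg(D)$ is then the image of $T_D$ under the map $\fg^{\otimes L}\to U(\fg)$ that multiplies the $L$ factors together in the cyclic order in which the corresponding legs meet the Wilson loop; by the third and first parts of the Claim on $\phi_\fg$, this product is well defined and independent of the chosen base point. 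The crucial bookkeeping point is that the internal contractions contribute only scalars (structure constants) and, where the commutator rule of the STU relation is applied, elements $[e_i,e_j]\in\fg$, each of which is again a single element of $\fg$; they never increase the number of factors read off along the Wilson loop. Thus every resulting monomial is a product of exactly $L$ elements of $\fg$.

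Combining the two steps, each monomial of $\phi_\fg(D)$ lies in $U_L(\fg)$, hence $\phi_\fg(D)\in U_L(\fg)$ and its degree is at most $L$, as claimed. The step I expect to require the most care is the third one: verifying that the leg-count is genuinely the number of tensor factors, i.e.\ that resolving internal structure through the commutator rule keeps every factor in $\fg$ (degree one) and produces no higher-degree contribution. An equivalent route is induction on the number of internal vertices via the STU relation, in which case the heart of the matter is the top-degree cancellation: the T- and U-diagrams each carry $L+1$ legs and thus a priori degree $\le L+1$, and one must check that their PBW symbols coincide so that $T-U$ drops to degree $\le L$, the two halves differing only by a commutator. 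As a sanity check, for a chord diagram with $n$ chords one has $L=2n$ legs and recovers exactly the bound $\deg\phi_\fg(D)\le 2n$ from the Remark at the end of Section~\ref{s2}.
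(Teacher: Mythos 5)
Your proposal is correct and is essentially the paper's own (very brief) argument: the paper likewise resolves the internal structure so that each leg carries a single element of $\fg$ --- trivalent vertices contributing commutators $[e_i,e_j]\in\fg$, understood as linear combinations of basis-vector markings --- so that $\phi_\fg(D)$ becomes a sum of ordered products of exactly $L$ elements of $\fg$, and the degree bound follows from the filtration of $U(\fg)$. Your tensor-contraction/PBW phrasing is just a more formal dressing of that same bookkeeping, and your worry about top-degree cancellation in the inductive STU route is moot since the direct route is the one both you and the paper actually use.
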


\section{Values of the $\SL_3$ weight system on certain Jacobi diagrams }\label{s5}
Given a weight system $w$, we write $\bar{w}:=w\circ\pi$ 
for its composition with the projection to the subspace of primitives
along the subspace of decomposable elements.

Here is an important lemma.
\begin{lemma}[leaf lemma]~\cite{Chmutov2007} Let $w_{\fg}$ be the weight system associated to a  metrized simple Lie algebra $\fg$ where the metric is proportional
to the Killing form, with proportionality coefficient~$\lambda$, 
	$c$ the quadratic Casimir element
	in $U(\fg)$. Then 
\begin{align*}
	w_{\fg}(D)=\left(c-\frac{1}{2\lambda}\right)w_{\fg}(D_a)
\end{align*}
for any Jacobi diagram $D$ and a leaf~$a$ (a chord intersecting 
a single leg) in it, $D_a$ being the Jacobi diagram $D$ with the chord $a$ removed.
\begin{align*}
	D=\begin{tikzpicture}[baseline={([yshift=-.5ex]current bounding box.center)}]
	\draw (0,0) circle (1);
	\draw ( 30:1) -- (-30:1)
	      (0.7,0) -- (1,0);
	\fill[black] ( 30:1) circle (1pt)
                 (-30:1) circle (1pt)
                 (1,0) circle (1pt);
    \node [scale=1] at ( 0.7, 0.3) {\tiny $a$};
\end{tikzpicture}\ \ \ \ 
	D_a=\begin{tikzpicture}[baseline={([yshift=-.5ex]current bounding box.center)}]
	\draw (0,0) circle (1);
	\draw %( 30:1) -- (-30:1)
	      (0.7,0) -- (1,0);
	\fill[black] %( 30:1) circle (1pt)
                 %(-30:1) circle (1pt)
                 (1,0) circle (1pt);
    %\node [scale=1] at ( 0.7, 0.3) {\tiny $a$};
\end{tikzpicture}
\end{align*}
\end{lemma}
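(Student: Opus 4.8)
\section*{Proof proposal}

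The plan is to reduce the statement to a single identity in $U(\fg)$ and then read it off from the state-sum definition of the weight system. Recall first how the leaf sits inside $D$: by definition the chord $a$ crosses exactly one leg, which means that the short arc of the Wilson loop cut off by the two feet of $a$ carries exactly one further univalent vertex, namely the foot of that single leg. Choosing the base point on the complementary arc, the three endpoints met consecutively along the short arc are a foot of $a$, then the foot of the leg, then the other foot of $a$. In the sum defining $\phi_\fg$ the two feet of $a$ contribute $e_{i}$ and $e_{i}^{*}$ summed over $i$, while the leg delivers a basis vector $e_j$ whose index is still to be contracted with the rest of the internal graph. Hence the contribution of this arc to the ordered product in $U(\fg)$ is $\sum_{i} e_{i}\,e_j\,e_{i}^{*}$, with everything outside the arc untouched (the symmetry $\sum_i e_i\otimes e_i^{*}=\sum_i e_i^{*}\otimes e_i$ of the Casimir tensor makes the labelling of the two feet of $a$ immaterial).

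The heart of the matter is the local identity
\[
	\sum_i e_i\,x\,e_i^{*}=\Bigl(c-\tfrac{1}{2\lambda}\Bigr)x,\qquad x\in\fg,
\]
where $c=\sum_i e_i e_i^{*}$ is the quadratic Casimir. To prove it I would commute the middle factor to the left, writing $\sum_i e_i x e_i^{*}=x\,c+\sum_i[e_i,x]\,e_i^{*}$, and then evaluate the second summand. Passing to a basis orthonormal for $\langle\cdot,\cdot\rangle$, so that $e_i^{*}=e_i$ and the structure constants $c_{ijk}=\langle[e_i,e_j],e_k\rangle$ are totally antisymmetric, the symmetric part of $\sum_{i,k}c_{ijk}e_k e_i$ cancels, leaving $\tfrac12\sum_{i,k}c_{ijk}[e_k,e_i]=\tfrac12\sum_{i,k,m}c_{ijk}c_{kim}\,e_m$. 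Up to the sign produced by $c_{kim}=-c_{ikm}$, the contraction $\sum_{i,k}c_{ijk}c_{ikm}$ is exactly the matrix of the Casimir acting in the adjoint representation; since $\fg$ is simple the adjoint representation is irreducible, so by Schur's lemma this operator is a scalar, equal to $1$ for the Killing form and hence to $\tfrac1\lambda$ for $\langle\cdot,\cdot\rangle=\lambda K$. This yields $\sum_i[e_i,x]e_i^{*}=-\tfrac{1}{2\lambda}x$ and, since $c$ is central, the displayed identity.

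It remains to assemble the pieces. Applying the identity with $x=e_j$ turns the arc contribution $\sum_i e_i e_j e_i^{*}$ into $\bigl(c-\tfrac{1}{2\lambda}\bigr)e_j$. Because $c$ lies in the centre $ZU(\fg)$ and $\tfrac{1}{2\lambda}$ is a scalar, the factor $c-\tfrac{1}{2\lambda}$ commutes past every remaining letter of the ordered product and can be pulled outside the whole state-sum. What is left inside, summed over $j$ against the unchanged internal graph and the unchanged Wilson-loop letters, is precisely $\phi_\fg(D_a)$, the value on the diagram with $a$ deleted and the leg left in place. Therefore $\phi_\fg(D)=\bigl(c-\tfrac{1}{2\lambda}\bigr)\phi_\fg(D_a)$, and since $w_\fg=\phi_\fg$ takes values in $ZU(\fg)$ this is the asserted factorization in the centre.

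I expect the genuine obstacle to be the constant in the middle step: pinning down that the structure-constant contraction equals $\tfrac1\lambda$ with the correct sign, which relies on irreducibility of the adjoint representation of a simple $\fg$ and on the normalization of the Killing form. The diagrammatic bookkeeping of the first and last steps is routine once this local identity is in hand; the only care needed there is to verify that the deletion of $a$ leaves the leg, and all other contractions, in exactly their original positions.
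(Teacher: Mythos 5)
Your proof is correct. The paper itself gives no proof of this lemma (it is quoted from \cite{Chmutov2007}), but your argument---reducing everything to the local identity $\sum_i e_i\,x\,e_i^{*}=\bigl(c-\tfrac{1}{2\lambda}\bigr)x$ for $x\in\fg$, established via antisymmetry of the structure constants and Schur's lemma applied to the adjoint representation of the simple algebra $\fg$, and then pulling the central factor out of the state sum---is exactly the standard proof behind that citation, and your normalization (metric $=\lambda K$, adjoint Casimir eigenvalue $1/\lambda$) matches the paper's conventions, e.g.\ the factor $c_2-3$ for $\SL_3$ with the trace form, where $\tfrac{1}{2\lambda}=3$.
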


For small degree Jacobi diagrams, we can resolve all the vertices and get the linear combination of chord diagram. For example
\[
	\begin{tikzpicture}[baseline={([yshift=-.5ex]current bounding box.center)},scale=0.3]
	\draw (0,0) circle (1);
	\draw ( 30:1) -- ( 0, 0.2)
          (-30:1) -- ( 0,-0.2)
          (150:1) -- (-0, 0.2)
          (210:1) -- (-0,-0.2)
          (-0,0.2) -- (-0,-0.2);
	\fill[black] ( 0, 0.2) circle (3pt)
                 (-0,-0.2) circle (3pt)
                 ( 30:1) circle (3pt)
                 (-30:1) circle (3pt)
                 (150:1) circle (3pt)
                 (210:1) circle (3pt);
	\end{tikzpicture}=
\begin{tikzpicture}[baseline={([yshift=-.5ex]current bounding box.center)},scale=0.3]
	\draw (0,0) circle (1);
	\draw (1,0) -- (-1,0);
	\fill[black] (1,0) circle (3pt)
                 (-1,0) circle (3pt)
                 (60:1) circle (3pt)
                 (120:1) circle (3pt)
                 (240:1) circle (3pt)
                 (300:1) circle (3pt);
	\draw (60:1)  ..  controls (0,0.2) and (0,-0.2)  .. (300:1);
	\draw (120:1) ..  controls (0,0.2) and (0,-0.2)  .. (240:1);
\end{tikzpicture}-2
\begin{tikzpicture}[baseline={([yshift=-.5ex]current bounding box.center)},scale=0.3]
	\draw (0,0) circle (1);
	\fill[black] (0:1) circle (3pt)
                 (180:1) circle (3pt)
                 (60:1) circle (3pt)
                 (120:1) circle (3pt)
                 (210:1) circle (3pt)
                 (330:1) circle (3pt);
	\draw (60:1)--(180:1);
	\draw (0:1)--(120:1);
	\draw (210:1)--(330:1);
\end{tikzpicture}+
\begin{tikzpicture}[baseline={([yshift=-.5ex]current bounding box.center)},scale=0.3]
	\draw (0,0) circle (1);
	\draw (1,0) -- (-1,0);
	\fill[black] (1,0) circle (3pt)
                 (-1,0) circle (3pt)
                 (45:1) circle (3pt)
                 (135:1) circle (3pt)
                 (225:1) circle (3pt)
                 (315:1) circle (3pt);
	\draw (45:1)--(135:1);
	\draw (225:1)--(315:1);
\end{tikzpicture}
\]
Based on the above table, we have 
\[
	w_{\SL_3}(\begin{tikzpicture}[baseline={([yshift=-.5ex]current bounding box.center)},scale=0.3]
	\draw (0,0) circle (1);
	\draw ( 30:1) -- ( 0, 0.2)
          (-30:1) -- ( 0,-0.2)
          (150:1) -- (-0, 0.2)
          (210:1) -- (-0,-0.2)
          (-0,0.2) -- (-0,-0.2);
	\fill[black] ( 0, 0.2) circle (3pt)
                 (-0,-0.2) circle (3pt)
                 ( 30:1) circle (3pt)
                 (-30:1) circle (3pt)
                 (150:1) circle (3pt)
                 (210:1) circle (3pt);
	\end{tikzpicture})=c_2(c_2-3)^2-2c_2^2(c_2-2)+c_2^3=9c_2
\]
Here are the results of similar computations for some small Jacobi diagrams
\[
\begin{array}{c|r}
\text{Jacobi \  diagram}&\SL_3 \text{ weight system } \\ \hline
\makecell{\gape{\begin{tikzpicture}[baseline={([yshift=-.5ex]current bounding box.center)},scale=0.3]
	\draw (0,0) circle (1);
	\draw ( 30:1) -- ( 0, 0.2)
          (-30:1) -- ( 0,-0.2)
          (150:1) -- (-0, 0.2)
          (210:1) -- (-0,-0.2)
          (-0,0.2) -- (-0,-0.2);
	\fill[black] ( 0, 0.2) circle (3pt)
                 (-0,-0.2) circle (3pt)
                 ( 30:1) circle (3pt)
                 (-30:1) circle (3pt)
                 (150:1) circle (3pt)
                 (210:1) circle (3pt);
	\end{tikzpicture}}}& 9c_2  \\ \hline
\makecell{\gape{\begin{tikzpicture}[baseline={([yshift=-.5ex]current bounding box.center)},scale=0.3]
	\draw (0,0) circle (1);
	\draw ( 30:1) -- ( 0.2, 0.2)
          (-30:1) -- ( 0.2,-0.2)
          (150:1) -- (-0.2, 0.2)
          (210:1) -- (-0.2,-0.2)
          (0.2,0.2) -- (0.2,-0.2)
          (0.2,0.2) -- (-0.2,0.2)
          (-0.2,-0.2) -- (0.2,-0.2)
          (-0.2,-0.2) -- (-0.2,0.2);
	\fill[black] ( 0.2, 0.2) circle (3pt)
                 (-0.2,-0.2) circle (3pt)
                 (-0.2,0.2) circle (3pt)
                 (0.2,-0.2) circle (3pt)
                 ( 30:1) circle (3pt)
                 (-30:1) circle (3pt)
                 (150:1) circle (3pt)
                 (210:1) circle (3pt);
	\end{tikzpicture}}}&c_2(9c_2+9)  \\ \hline
\makecell{\gape{\begin{tikzpicture}[baseline={([yshift=-.5ex]current bounding box.center)},scale=0.3]
	\draw (0,0) circle (1);
	\draw ( 30:1) -- ( 0.4, 0.2)
          (-30:1) -- ( 0.4,-0.2)
          (150:1) -- (-0.4, 0.2)
          (210:1) -- (-0.4,-0.2)
          (-0.4,0.2) -- (-0.4,-0.2)
          (-0,0.2) -- (-0,-0.2)
          (0.4,0.2) -- (0.4,-0.2)
          (-0.4,0.2) -- (0.4,0.2)
          (-0.4,-0.2) -- (0.4,-0.2);
	\fill[black] ( 0, 0.2) circle (3pt)
                 (-0,-0.2) circle (3pt)
                 (-0.4,-0.2) circle (3pt)
                 (0.4,-0.2) circle (3pt)
                 (-0.4,0.2) circle (3pt)
                 (0.4,0.2) circle (3pt)
                 ( 30:1) circle (3pt)
                 (-30:1) circle (3pt)
                 (150:1) circle (3pt)
                 (210:1) circle (3pt);
	\end{tikzpicture}}}&c_2(27c_2+63)  \\ \hline
\makecell{\gape{\begin{tikzpicture}[baseline={([yshift=-.5ex]current bounding box.center)},scale=0.3]
	\draw (0,0) circle (1);
	\draw ( 30:1) -- ( 0.6, 0.2)
          (-30:1) -- ( 0.6,-0.2)
          (150:1) -- (-0.6, 0.2)
          (210:1) -- (-0.6,-0.2)
          (0.6,0.2) -- (0.6,-0.2)
          (-0.6,0.2) -- (-0.6,-0.2)
          (0.2,0.2) -- (0.2,-0.2)
          (-0.2,0.2) -- (-0.2,-0.2)
          (-0.6,-0.2) -- (0.6,-0.2)
          (-0.6,0.2) -- (0.6,0.2);
	\fill[black] ( 0.2, 0.2) circle (3pt)
                 (-0.2,-0.2) circle (3pt)
                 (-0.2,0.2) circle (3pt)
                 (0.2,-0.2) circle (3pt)
                 (0.6,-0.2) circle (3pt)
                 (0.6,0.2) circle (3pt)
                 (-0.6,-0.2) circle (3pt)
                 (-0.6,0.2) circle (3pt)
                 ( 30:1) circle (3pt)
                 (-30:1) circle (3pt)
                 (150:1) circle (3pt)
                 (210:1) circle (3pt);
	\end{tikzpicture}}}&c_2(189c_2+117)  \\ \hline
\end{array}
\]

Denote by $J_{i,j}$ the order $i+j+2$ Jacobi diagram with $i-1$ cells and $j$ chords crossing cells. ($i,j\ge 0$)
\[
	J_{i,j}=\begin{tikzpicture}[baseline={([yshift=-.5ex]current bounding box.center)}]
	\draw (0,0) circle (1);
	\draw ( 30:1) -- ( 0.6, 0.2)
          (-30:1) -- ( 0.6,-0.2)
          (150:1) -- (-0.6, 0.2)
          (210:1) -- (-0.6,-0.2)

          (180+40:1) -- (180-40:1)
          (180+50:1) -- (180-50:1)
          (180+60:1) -- (180-60:1)
          (180+70:1) -- (180-70:1)
          (180+80:1) -- (180-80:1)
          ( 0.6, 0.2) -- ( 0.6,-0.2)
          (-0.6, 0.2) -- (-0.6,-0.2)
          ( 0.6, 0.2) -- ( 0.1, 0.2)
          ( 0.6,-0.2) -- ( 0.1,-0.2)
          (-0.6, 0.2) -- (-0.1, 0.2)
          (-0.6,-0.2) -- (-0.1,-0.2)
          ( 0.4, 0.2) -- ( 0.4,-0.2)
          (-0.4, 0.2) -- (-0.4,-0.2)
          ( 0.2, 0.2) -- ( 0.2,-0.2)
          (-0.2, 0.2) -- (-0.2,-0.2);
	\fill[black] ( 0.6, 0.2) circle (1pt)
                 (-0.6, 0.2) circle (1pt)
                 ( 0.6,-0.2) circle (1pt)
                 (-0.6,-0.2) circle (1pt)
                 ( 0.4, 0.2) circle (1pt)
                 (-0.4, 0.2) circle (1pt)
                 ( 0.4,-0.2) circle (1pt)
                 (-0.4,-0.2) circle (1pt)
                 ( 0.2, 0.2) circle (1pt)
                 (-0.2, 0.2) circle (1pt)
                 ( 0.2,-0.2) circle (1pt)
                 (-0.2,-0.2) circle (1pt)
                 ( 30:1) circle (1pt)
                 (-30:1) circle (1pt)
                 (150:1) circle (1pt)
                 (210:1) circle (1pt)

                 (100:1) circle (1pt)
                 (110:1) circle (1pt)
                 (120:1) circle (1pt)
                 (130:1) circle (1pt)
                 (140:1) circle (1pt)
                 (220:1) circle (1pt)
                 (230:1) circle (1pt)
                 (240:1) circle (1pt)
                 (250:1) circle (1pt)
                 (260:1) circle (1pt);
    \node at (0,0) {\ldots};
    \node at (0,0.5) {\ldots};    
    \node [below,scale=0.75] at ( 0.6,-0.2) {\tiny $i$};
    \node [below,scale=0.75] at ( 0.4,-0.2) {\tiny i-$1$};
    \node [below,scale=0.75] at ( 0.2,-0.2) {\tiny i-$2$};
    \node [below,scale=0.75] at (-0.2,-0.2) {\tiny $3$};
    \node [below,scale=0.75] at (-0.4,-0.2) {\tiny $2$};
    \node [below,scale=0.75] at (-0.6,-0.2) {\tiny $1$};
    \node [scale=0.8] at (0.3,0.7) {\tiny $j$ chords};
\end{tikzpicture}
\]
Specifically,
\[
	J_{i,0}=\begin{tikzpicture}[baseline={([yshift=-.5ex]current bounding box.center)}]
	\draw (0,0) circle (1);
	\draw ( 30:1) -- ( 0.6, 0.2)
          (-30:1) -- ( 0.6,-0.2)
          (150:1) -- (-0.6, 0.2)
          (210:1) -- (-0.6,-0.2)
          ( 0.6, 0.2) -- ( 0.6,-0.2)
          (-0.6, 0.2) -- (-0.6,-0.2)
          ( 0.6, 0.2) -- ( 0.1, 0.2)
          ( 0.6,-0.2) -- ( 0.1,-0.2)
          (-0.6, 0.2) -- (-0.1, 0.2)
          (-0.6,-0.2) -- (-0.1,-0.2)
          ( 0.4, 0.2) -- ( 0.4,-0.2)
          (-0.4, 0.2) -- (-0.4,-0.2)
          ( 0.2, 0.2) -- ( 0.2,-0.2)
          (-0.2, 0.2) -- (-0.2,-0.2);
	\fill[black] ( 0.6, 0.2) circle (1pt)
                 (-0.6, 0.2) circle (1pt)
                 ( 0.6,-0.2) circle (1pt)
                 (-0.6,-0.2) circle (1pt)
                 ( 0.4, 0.2) circle (1pt)
                 (-0.4, 0.2) circle (1pt)
                 ( 0.4,-0.2) circle (1pt)
                 (-0.4,-0.2) circle (1pt)
                 ( 0.2, 0.2) circle (1pt)
                 (-0.2, 0.2) circle (1pt)
                 ( 0.2,-0.2) circle (1pt)
                 (-0.2,-0.2) circle (1pt)
                 ( 30:1) circle (1pt)
                 (-30:1) circle (1pt)
                 (150:1) circle (1pt)
                 (210:1) circle (1pt);
    \node at (0,0) {\ldots};
    %\node at (0,0.5) {\ldots};    
    \node [below,scale=0.75] at ( 0.6,-0.2) {\tiny $i$};
    \node [below,scale=0.75] at ( 0.4,-0.2) {\tiny i-$1$};
    \node [below,scale=0.75] at ( 0.2,-0.2) {\tiny i-$2$};
    \node [below,scale=0.75] at (-0.2,-0.2) {\tiny $3$};
    \node [below,scale=0.75] at (-0.4,-0.2) {\tiny $2$};
    \node [below,scale=0.75] at (-0.6,-0.2) {\tiny $1$};
    %\node [scale=0.8] at (0.3,0.7) {\tiny $j$ chords};
\end{tikzpicture}
\]
and $J_{0,j}$ is  the chord diagram whose intersection graph is
the complete bipartite graph $K_{2,j}$.

Our first main result is the following 
\begin{theorem}\label{th1}
For any simple Lie algebra~$\fg$ endowed with the scalar product 
proportional to the Killing form with proportionality coefficient~$\lambda$,
one has
\[
	\bar{w}_{\fg}(J_{i,j})=\bar{w}_{\fg}(J_{i-1,j+1})+\frac{1}{\lambda}\bar{w}_{\fg}(J_{i-1,j})
\]
\end{theorem}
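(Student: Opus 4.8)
The plan is to prove the recurrence by resolving the leftmost cell of the ladder $J_{i,j}$ (assuming $i\ge 1$) and then evaluating the resulting local picture with the $\fg$-weight system. Four ingredients will be combined: the STU relation, which trades a trivalent vertex adjacent to the Wilson loop for a difference of two diagrams with that vertex pushed onto the loop; the \emph{bubble relation}, stating that under $w_\fg$ a pair of edges joining two trivalent vertices (a bubble) equals $\tfrac1\lambda$ times a single edge, which is exactly the statement that the quadratic Casimir acts as $\tfrac1\lambda\cdot\mathrm{id}$ in the adjoint representation of a simple $\fg$ metrized by $\lambda$ times its Killing form; the fact (Claim in Sec.~\ref{s4}) that connected closed diagrams are primitive; and the fact that $\pi$ annihilates decomposable elements, so that $\bar w_\fg=w_\fg\circ\pi$ kills every product of two diagrams of positive degree.

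Concretely, I picture $J_{i,j}$ as two ``strands'' (the two chords of the underlying $K_{2,\ast}$) subdivided by $i$ attachment vertices $t_1,\dots,t_i$ and $b_1,\dots,b_i$, with rungs $t_kb_k$ forming the $i-1$ cells, together with the $j$ crossing chords. First I would apply the STU relation at the vertex $t_1$, where the leftmost end of the top strand meets the Wilson loop, writing $J_{i,j}=T-U$ with $t_1$ removed and its two internal edges (the strand segment $t_1t_2$ and the rung $t_1b_1$) reattached to the loop in the two possible cyclic orders. One of these two terms reassembles, after an orientation change at the trivalent vertices, into the ladder with one fewer rung and one extra crossing chord, i.e.\ into $J_{i-1,j+1}$; the antisymmetry of the internal vertex is what converts the minus sign of STU into the plus sign of the stated recurrence. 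In the other term the freed rung edge runs parallel to rung~$2$ between the same pair of strand points, producing a diagram $B$ that contains a bubble straddling the (now collapsed) leftmost cell.

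Next I would evaluate $B$. Deleting the bubble by the adjoint--Casimir relation replaces it by $\tfrac1\lambda$ times a single edge and turns $B$ into exactly $\tfrac1\lambda\,J_{i-1,j}$. The one subtlety is that the bubble relation is an identity of $w_\fg$-values, not a relation in $\cC^{STU}$, so a priori it need not survive the projection $\pi$ that defines $\bar w_\fg$. Here I would use that the two vertices of the bubble stay in one and the same connected component throughout: in every term $D_J\otimes D_{\bar J}$ of the coproduct the bubble sits inside a single tensor factor, and since $w_\fg$ is multiplicative the bubble reduction may be performed factor by factor. Consequently $\bar w_\fg(B)=\tfrac1\lambda\,\bar w_\fg(J_{i-1,j})$, and any genuinely decomposable diagrams generated along the way are annihilated by $\pi$. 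Combining the two terms yields $\bar w_\fg(J_{i,j})=\bar w_\fg(J_{i-1,j+1})+\tfrac1\lambda\bar w_\fg(J_{i-1,j})$.

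I expect the main obstacle to be the bookkeeping in the middle paragraph: matching the two STU outputs to $J_{i-1,j+1}$ and to the bubbled diagram $B$, and checking that the orientations of the internal vertices produce the asserted signs (a single STU contributes a minus, whereas the recurrence is a sum). The conceptually delicate point is the compatibility of the bubble reduction with $\pi$; once the ``bubble lives in one component'' observation is set up carefully it is routine, but it is what makes the identity hold for $\bar w_\fg$ even though, as the small-order values show, it fails for $w_\fg$ itself.
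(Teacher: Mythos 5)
Your overall strategy---resolve the extreme cell by STU, discard decomposables under $\pi$, and account for a leftover $\tfrac{1}{\lambda}$ term---is in the right spirit, but the combinatorial core of your argument is wrong. A single application of STU at $t_1$ cannot produce either of the two diagrams you claim. STU reattaches the freed edges to the \emph{Wilson loop}, not to other internal vertices, so in both resulting terms the vertex $b_1$ survives as a trivalent vertex with two legs on the loop (its old leg at the bottom-left attachment point and the freed end of the rung) and one internal edge to $b_2$. In particular each of the two terms has $2i-1$ internal vertices, whereas $J_{i-1,j+1}$ has $2i-2$, so no change of vertex orientations can turn either term into $J_{i-1,j+1}$; and since no two internal vertices in these terms are joined by a double edge, neither term contains a bubble---the freed rung edge ends on the loop and cannot ``run parallel to rung~2''. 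The bubble relation you quote (bubble $=\tfrac{1}{\lambda}\,\cdot$ edge, from the adjoint Casimir) is a correct fact, and your observation that such a local, single-component identity of $w_\fg$-values passes through the projection $\pi$ is sound in principle, but there is no bubble here to apply it to.

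The proof that works resolves \emph{both} ends of the extreme rung, i.e.\ STU at $t_1$ and then at $b_1$, producing four terms with signs $+,-,-,+$: the rung becomes a chord crossing both strands (this term is $J_{i-1,j+1}$), a chord crossing exactly one strand (two \emph{leaf} diagrams), or an isolated chord (the decomposable product of $J_{i-1,j}$ with an isolated chord, which $\pi$ kills). The $\tfrac{1}{\lambda}$ then comes not from a bubble but from the leaves, and this is where the real technical content lies: the leaf lemma before projection reads $w_\fg(D)=\left(c-\tfrac{1}{2\lambda}\right)w_\fg(D_a)$, and one must prove a ``leaf lemma after projection'', $\bar{w}_\fg(D)=-\tfrac{1}{2\lambda}\bar{w}_\fg(D_a)$, i.e.\ show that the Casimir part dies on passing to primitives; this requires an argument with the explicit projection formula, which your proposal never confronts. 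The signs then come out as (minus from STU) times (minus $\tfrac{1}{2\lambda}$ from the projected leaf lemma) for each of the two leaves, giving $+\tfrac{1}{\lambda}\bar{w}_\fg(J_{i-1,j})$---not, as you suggest, from antisymmetry converting the STU minus into a plus. A useful sanity check that your mechanism cannot be right: before projection the recurrence is false, as the four-term resolution gives $w_\fg(J_{i,j})=w_\fg(J_{i-1,j+1})+\left(\tfrac{1}{\lambda}-c\right)w_\fg(J_{i-1,j})$, and it is precisely the interplay of the leaf terms, the product term, and $\pi$ that removes the $c$; a purely local bubble reduction, which commutes with $\pi$ exactly as you argue, could never account for a relation that holds after projection but fails before it.
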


Theorem~\ref{th1} implies the following
\begin{corollary}\label{cor1} The following assertions are true:
\begin{enumerate}
\item the value $\bar{w}_{\fg}(J_{0,j})$ has degree at most $4$;
\item we have $\bar{w}_{\fg}(J_{i,0})=\sum_{k=0}^i\tbinom{i}{k}\lambda^{-k}\bar{w}_{\fg}(J_{0,i-k})$;
\item we have $\sum_{n=0} \bar{w}_{\fg}(J_{n,0})\frac{x^n}{n!}=e^{\frac{x}{\lambda}}\sum_{n=0} \bar{w}_{\fg}(J_{0,n})\frac{x^n}{n!}$.
\end{enumerate}

\end{corollary}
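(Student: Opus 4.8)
The plan is to set $f(i,j):=\bar{w}_{\fg}(J_{i,j})$ and to treat the three assertions in turn. Assertions~(2) and~(3) are purely formal consequences of the recurrence in Theorem~\ref{th1}, so I would dispatch them first; assertion~(1) requires structural information about the primitive projection $\pi(J_{0,j})$ and is the \emph{main obstacle}.

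For assertion~(2), I would prove by induction on $i$ that, for every $j\ge 0$,
\[
	f(i,j)=\sum_{k=0}^{i}\binom{i}{k}\lambda^{-k}\,f(0,j+i-k).
\]
The base case $i=0$ is trivial. For the inductive step, apply Theorem~\ref{th1} in the form $f(i,j)=f(i-1,j+1)+\lambda^{-1}f(i-1,j)$, insert the inductive hypothesis into both terms, reindex the second sum by $k\mapsto k-1$, and collect coefficients: the coefficient of $\lambda^{-k}f(0,j+i-k)$ becomes $\binom{i-1}{k}+\binom{i-1}{k-1}=\binom{i}{k}$ by Pascal's identity. Setting $j=0$ gives the stated formula. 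Combinatorially this just counts the $i$-step descents from $(i,0)$ to first index $0$, each step either raising the second index (weight $1$) or fixing it (weight $\lambda^{-1}$), the latter chosen $k$ times in $\binom{i}{k}$ ways.

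Assertion~(3) then follows formally. The $j=0$ case of~(2) exhibits $\big(f(n,0)\big)_n$ as the binomial convolution of $\big(\lambda^{-k}\big)_k$ with $\big(f(0,m)\big)_m$, and binomial convolution corresponds to the product of exponential generating functions. Since $\sum_{k\ge 0}\lambda^{-k}x^{k}/k!=e^{x/\lambda}$, this yields $\sum_{n}f(n,0)\,x^{n}/n!=e^{x/\lambda}\sum_{n}f(0,n)\,x^{n}/n!$, which is exactly~(3).

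The remaining assertion~(1) is the crux, and the recurrence is of no direct help here, since it only lowers the first index while $J_{0,j}$ already has $i=0$; moreover the degree lemma of Section~\ref{s4} applied to $J_{0,j}$ as a chord diagram gives only the useless bound $2j+4$ on the number of legs. The point is that passing to primitives collapses the degree. Under the isomorphism $\lambda:\cA^{fr}\xrightarrow{\sim}\cC^{STU}$, the element $\pi(J_{0,j})$ is a combination of \emph{connected} closed diagrams, and I would show it admits a representative supported on closed diagrams with at most four legs: these four legs are the endpoints of the two chords forming the part of size two in $K_{2,j}$, while the $j$ mutually parallel chords crossing them are absorbed into the internal trivalent graph by the $STU$ relation, modulo decomposable diagrams (which $\pi$ annihilates). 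This is precisely the kind of Hopf-subalgebra analysis carried out by Filippova~\cite{Filippova2020,Filippova2021}. Once a four-leg representative is secured, the degree lemma gives $\deg w_{\fg}(\pi(J_{0,j}))\le 4$, i.e. $\deg\bar{w}_{\fg}(J_{0,j})\le 4$. I expect the genuine difficulty to lie in making this reduction to four legs rigorous and uniform in $j$; the rest is bookkeeping.
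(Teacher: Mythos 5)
Your proofs of assertions (2) and (3) are correct and are exactly the formal consequences of Theorem~\ref{th1} that the paper intends (the paper states the corollary without writing these steps out). The genuine gap is in assertion (1), and it stems from your claim that ``the recurrence is of no direct help here.'' That claim is mistaken, and it leads you to replace a short argument, all of whose ingredients are already in Sec.~\ref{s4}, by an unproved sketch. Invert the generating-function identity you established in (3): multiplying by $e^{-x/\lambda}$ gives
\[
\bar{w}_{\fg}(J_{0,j})=\sum_{k=0}^{j}\binom{j}{k}\left(-\frac{1}{\lambda}\right)^{k}\bar{w}_{\fg}(J_{j-k,0}),
\]
so it suffices to bound the degree of $\bar{w}_{\fg}(J_{i,0})$. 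For $i\ge 1$ the diagram $J_{i,0}$ is a \emph{connected} closed Jacobi diagram (its internal graph is a ladder), hence primitive by the claim in Sec.~\ref{s4}; since the projection $\pi$ is the identity on primitives, $\bar{w}_{\fg}(J_{i,0})=w_{\fg}(J_{i,0})$. As $J_{i,0}$ has exactly four legs, the degree lemma of Sec.~\ref{s4} gives $\deg w_{\fg}(J_{i,0})\le 4$; the remaining $k=j$ term involves only the order-two diagram $J_{0,0}$, whose projection is a combination of diagrams with four legs, so it too has degree at most $4$. Hence the finite linear combination above has degree at most $4$. Note that you applied the degree lemma to $J_{0,j}$ itself and correctly found the bound useless; the whole point of the recurrence is to transfer the question from the family $J_{0,j}$ (many legs, but decomposable corrections) to the family $J_{i,0}$ (always four legs, and primitive on the nose).

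Your substitute argument for (1) --- absorbing the $j$ parallel chords of $K_{2,j}$ into the internal trivalent graph via $STU$ modulo decomposables, so as to exhibit a four-legged representative of $\pi(J_{0,j})$ --- is a plausible diagram-level statement; indeed the paper asserts a version of it as a \emph{consequence} of its computations and raises a generalization as the Lando--Yang conjecture. But you do not prove it, and you say yourself that making the reduction rigorous and uniform in $j$ is the ``genuine difficulty.'' As written, assertion (1) is therefore not established by your proposal, even though everything needed for it was already at hand in parts (2)--(3) together with the lemmas of Sec.~\ref{s4}.
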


\section{Proof of Theorem~\ref{th1}}\label{s6}
In order to prove Theorem~\ref{th1}, we need the \textit{leaf lemma after projection}

\begin{lemma}[leaf lemma after projection]
For any simple Lie algebra~$\fg$ endowed with the scalar product 
proportional to the Killing form with proportionality coefficient~$\lambda$,
one has
\begin{align*}
	\bar{w}_{\fg}(D)=-\frac{1}{2\lambda}\bar{w}_{\fg}(D_a); \ \ \ \  \ \ \ \ \ \ \  \ \\
\end{align*}
for any Jacobi diagram $D$ and any leaf~$a$ in it.
\begin{align*}
	D=\begin{tikzpicture}[baseline={([yshift=-.5ex]current bounding box.center)}]
	\draw (0,0) circle (1);
	\draw ( 30:1) -- (-30:1)
	      (0.6,0) -- (1,0);
	\fill[black] ( 30:1) circle (1pt)
                 (-30:1) circle (1pt)
                 (1,0) circle (1pt);
    \node [scale=1] at ( 0.7, 0.3) {\tiny $a$};
    \node [scale=1] at ( 0.5, 0) {\tiny $b$};
\end{tikzpicture}\ \ \ \ 
	D_a=\begin{tikzpicture}[baseline={([yshift=-.5ex]current bounding box.center)}]
	\draw (0,0) circle (1);
	\draw %( 30:1) -- (-30:1)
	      (0.6,0) -- (1,0);
	\fill[black] %( 30:1) circle (1pt)
                 %(-30:1) circle (1pt)
                 (1,0) circle (1pt);
    %\node [scale=1] at ( 0.7, 0.3) {\tiny $a$};
    \node [scale=1] at ( 0.5, 0) {\tiny $b$};
\end{tikzpicture}
\end{align*}
\end{lemma}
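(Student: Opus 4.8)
The plan is to reduce the statement to a purely combinatorial identity by feeding the explicit formula for the projection $\pi$ onto primitives into the multiplicative weight system $w_\fg$, and then invoking the ordinary leaf lemma blockwise. Write $\Theta$ for the one-chord diagram, so that $w_\fg(\Theta)=c$, and recall that $w_\fg$ is a homomorphism of algebras, hence $w_\fg(X\cdot Y)=w_\fg(X)\,w_\fg(Y)$. The crucial structural remark is that the leaf $a$ is, all by itself, a connected component of the internal graph of $D$: it is a single chord attached to no internal vertex, so the set of connected components splits as $[D]=[D_a]\sqcup\{a\}$. Combining the projection formula with multiplicativity then gives
\[
  \bar w_\fg(D)=\sum_{\rho\,\vdash\,[D]}(-1)^{|\rho|-1}(|\rho|-1)!\prod_{B\in\rho}w_\fg(D_B),
\]
together with the analogous expression for $\bar w_\fg(D_a)$ as a sum over set partitions $\sigma$ of $[D_a]$.

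The key geometric input is how $a$ sits inside each block. Let $c_b\in[D_a]$ be the component met by the unique leg $b$ that $a$ crosses. In a sub-diagram $D_B$ whose block $B$ contains $a$, there are two cases. If $B$ also contains $c_b$, then $a$ still crosses exactly the one leg $b$ and nothing else, so $a$ is a genuine leaf of $D_B$ and the ordinary leaf lemma yields $w_\fg(D_B)=(c-\tfrac{1}{2\lambda})\,w_\fg((D_a)_{B\setminus\{a\}})$. If $B$ does not contain $c_b$, then $a$ crosses nothing in $D_B$, so it is an isolated chord, $D_B=\Theta\cdot(D_a)_{B\setminus\{a\}}$, and multiplicativity gives $w_\fg(D_B)=c\,w_\fg((D_a)_{B\setminus\{a\}})$. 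Thus every block containing $a$ contributes $w_\fg((D_a)_{B\setminus\{a\}})$ times either $c-\tfrac{1}{2\lambda}$ or $c$, the former occurring exactly when the block also carries $c_b$.

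It then remains to reorganize the sum over partitions of $[D]$ according to the placement of $a$ relative to a partition $\sigma$ of $[D_a]$: either $a$ forms its own singleton block (contributing the factor $c$ and raising the block count by one), or $a$ is adjoined to one of the $k=|\sigma|$ blocks of $\sigma$. Summing the adjunction over the $k$ blocks produces the factor $(c-\tfrac{1}{2\lambda})+(k-1)c=kc-\tfrac{1}{2\lambda}$, since precisely one block carries $c_b$. Writing both contributions with the projection coefficients, the singleton term carries $(-1)^{k}k!\,c$, while the $kc$-part of the adjunction term carries $(-1)^{k-1}(k-1)!\cdot kc=(-1)^{k-1}k!\,c$; these cancel identically for every $\sigma$. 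What survives is exactly $-\tfrac{1}{2\lambda}\sum_{\sigma}(-1)^{|\sigma|-1}(|\sigma|-1)!\prod_{B\in\sigma}w_\fg((D_a)_B)=-\tfrac{1}{2\lambda}\,\bar w_\fg(D_a)$, which is the claim. The only real subtlety, and the step I would verify most carefully, is the geometric dichotomy of the second paragraph: that deleting the component $c_b$ genuinely turns the leaf into an isolated chord equal to $\Theta\cdot(D_a)_{B\setminus\{a\}}$, and that the ordinary leaf lemma applies verbatim to every sub-Jacobi-diagram $D_B$. Once this is secured, the cancellation of the Casimir contributions is automatic.
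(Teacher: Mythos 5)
Your proof is correct and takes essentially the same route as the paper's: both expand $\bar{w}_{\fg}$ through the explicit projection formula, split the partition sum according to whether the block containing the leaf $a$ is a singleton, contains the component carrying the crossed leg $b$, or contains neither, apply the ordinary leaf lemma (respectively the isolated-chord factorization giving the factor $c$) blockwise, and watch the Casimir terms cancel, leaving $-\tfrac{1}{2\lambda}\bar{w}_{\fg}(D_a)$. Your reorganization by a fixed partition $\sigma$ of $[D_a]$ with the placement of $a$ summed out is merely a tidier bookkeeping of the same cancellation the paper carries out.
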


\begin{proof}
By the formula for the projection, 
\[
	\pi(D)=D-\sum_{i=2}^{|[D]|}(-1)^i(i-1)!\sum_{\substack{\bigsqcup\limits_{j=1}^i [D_j]=[D]\\ [D_j]\ne \emptyset}}\prod_{j=1}^i D_j
\]where the sum is taken over all unordered splittings of the set
 $[D]$ of connected components of $D$ into nonempty subsets $[D_j]$ and $D_j$ is the sub-diagram induced by $[D_j]$. 
Now, after picking a chord $a$, we rewrite the previous formula in the form 
%\[\hspace{-2cm}
\begin{eqnarray*}
	\pi(D)&=&D-\sum_{i=2}^{|[D]|}(-1)^{i}(i-1)!
	\left(\sum_{\substack{\bigsqcup\limits_{j=1}^i [D_j]=[D]\\ [D_j]\ne \emptyset;\{a\}=D_1}}\{a\}\prod_{j=2}^i D_j+\sum_{\substack{\bigsqcup\limits_{j=1}^i [D_j]=[D]\\ [D_j]\ne \emptyset; \{a,b\}\subseteq D_1}}\prod_{j=1}^i D_j\right. \\
	&&\left.+\sum_{\substack{\bigsqcup\limits_{j=1}^i [D_j]=[D]\\ [D_j]\ne \emptyset;\{a\}\subsetneq D_1;b\notin D_1}}(\{a\}\sqcup D_{a,1})\prod_{j=2}^i D_j\right).
\end{eqnarray*}
%\]
Now, applying $w_\fg$ to both sides and using the Leaf lemma we obtain
the required:
\begin{align*}
	\bar{w}_\fg(D)&=(c-\frac{1}{2\lambda})w_\fg(D_a)-\sum_{i=2}^{|[D]|}(-1)^{i}(i-1)!\left(\sum_{\substack{\bigsqcup\limits_{j=1}^{i-1} [D_j]=[D]\\ [D_j]\ne \emptyset}}cw_\fg(\prod_{j=1}^{i-1}D_j)\right. \\
	&\left.+\sum_{\substack{\bigsqcup\limits_{j=1}^i [D_{a,j}]=[D_a]\\ [D_{a,j}]\ne \emptyset}}(c-\frac{1}{2\lambda})w_\fg(\prod_{j=1}^i D_{a,j})\mathrel{\phantom{=}}+\sum_{\substack{\bigsqcup\limits_{j=1}^i [D_{a,j}]=[D_a]\\ [D_{a,j}]\ne \emptyset}}(i-1)cw_\fg(\prod_{j=1}^i D_{a,j})\right) \\
	&=(c-\frac{1}{2\lambda})w_\fg(D_a)-cw_\fg(D_a)\\
	&-\sum_{i=2}^{|[D_a]|}\left((-1)^{i+1}i!+(-1)^{i}(i-1)!(i-1)\right)\sum_{\substack{\bigsqcup\limits_{j=1}^{i} [D_{a,j}]=[D_a]\\ [D_{a,j}]\ne \emptyset}}cw_\fg(\prod_{j=1}^{i}D_{a,j}) \\
	&\mathrel{\phantom{=}}+ (2c-\frac{1}{2\lambda})(\bar{w}_\fg(D_a)-w_\fg(D_a)) \\
	&=(c-\frac{1}{2\lambda})w_\fg(D_a)-cw_\fg(D_a) -c\left(\bar{w}_\fg(D_a)-w_\fg(D_a)\right)\\
	&+ (c-\frac{1}{2\lambda})\left(\bar{w}_\fg(D_a)-w_\fg(D_a)\right) \\
	&= -\frac{1}{2\lambda}\bar{w}_{\fg}(D_a)
\end{align*}
\end{proof}

In order to do the induction step, let us resolve the extreme
internal vertices in $J_{i,j}$ by means of the $STU$ relations:
\[
	\begin{tikzpicture}[baseline={([yshift=-.5ex]current bounding box.center)}]
	\draw (0,0) circle (1);
	\draw ( 30:1) -- ( 0.4, 0.2)
          (-30:1) -- ( 0.4,-0.2)
          (0.4,0.2) -- (0.4,-0.2)
          (0.1,-0.2) -- (0.4,-0.2)
          (0.1,0.2) -- (0.4,0.2);
	\fill[black] (0.4,-0.2) circle (1pt)
                 (0.4,0.2) circle (1pt)
                 ( 30:1) circle (1pt)
                 (-30:1) circle (1pt);
	\end{tikzpicture}=
	\begin{tikzpicture}[baseline={([yshift=-.5ex]current bounding box.center)}]
	\draw (0,0) circle (1);
	\draw ( 30:1) -- ( 0, 0.2)
          (-30:1) -- ( 0,-0.2)
          ( 60:1) -- (-60:1);
	\fill[black] ( 30:1) circle (1pt)
                 (-30:1) circle (1pt)
                 (-60:1) circle (1pt)
                 ( 60:1) circle (1pt);
	\end{tikzpicture}-
	\begin{tikzpicture}[baseline={([yshift=-.5ex]current bounding box.center)}]
	\draw (0,0) circle (1);
	\draw ( 30:1) -- ( 0, 0.2)
          (-30:1) -- ( 0,-0.2)
          ( 60:1) -- (  0:1);
	\fill[black] ( 30:1) circle (1pt)
                 (-30:1) circle (1pt)
                 (  0:1) circle (1pt)
                 ( 60:1) circle (1pt);
	\end{tikzpicture}-
	\begin{tikzpicture}[baseline={([yshift=-.5ex]current bounding box.center)}]
	\draw (0,0) circle (1);
	\draw ( 30:1) -- ( 0, 0.2)
          (-30:1) -- ( 0,-0.2)
          (  0:1) -- (-60:1);
	\fill[black] ( 30:1) circle (1pt)
                 (-30:1) circle (1pt)
                 (-60:1) circle (1pt)
                 (  0:1) circle (1pt);
	\end{tikzpicture}+
	\begin{tikzpicture}[baseline={([yshift=-.5ex]current bounding box.center)}]
	\draw (0,0) circle (1);
	\draw ( 30:1) -- ( 0, 0.2)
          (-30:1) -- ( 0,-0.2)
          (15:1) .. controls (0.9,0).. (-15:1);
	\fill[black] ( 30:1) circle (1pt)
                 (-30:1) circle (1pt)
                 (-15:1) circle (1pt)
                 ( 15:1) circle (1pt);
	\end{tikzpicture}
\]
The diagram on the left is $J_{i,j}$. The first diagram on the right is $J_{i-1,j+1}$. The second and third diagrams on the right are $J_{i-1,j}$ with a leaf, respectively. The fourth diagram on the right is $J_{i-1,j}$ times an isolated chord.

Applying $\bar{w}_{\fg}$ to both sides of the equation
and using the fact that a product of two nontrivial Jacobi diagrams
projects to~$0$ in primitives, we get Theorem~\ref{th1}:
\begin{align*}
	\bar{w}_{\fg}(J_{i,j})&=\bar{w}_{\fg}(J_{i-1,j+1})+2\times\frac{1}{2\lambda}\bar{w}_{\fg}(J_{i-1,j})+0 \\
	                      &=\bar{w}_{\fg}(J_{i-1,j+1})+\frac{1}{\lambda}\bar{w}_{\fg}(J_{i-1,j}).
\end{align*}

\section{Values of the $\SL_3$ weight system on the special family $J_{i,0}$ of Jacobi diagrams}\label{s7}
Let $W(\cdot)$ denote the weight system associated to $\SL_3$ and the matrix trace as the invariant bilinear form.
Below, we will make use of the following  theorem 
about the values of this weight system on Jacobi diagrams.

\begin{theorem}[Kenichi Kuga and Shutaro Yoshizumi~\cite{yoshizumi1999some}]\label{thm7}
For Jacobi diagrams that are different only in parts depicted as below, the following relations hold:
\begin{enumerate}
\item $W(\begin{tikzpicture}[baseline={([yshift=-.5ex]current bounding box.center)}]
	\draw (0,0) circle (0.2);
	\draw ( 90:0.5) -- ( 90:0.2)
	      (-90:0.5) -- (-90:0.2);
	\fill[black]( 90:0.2) circle (1pt)
                (-90:0.2) circle (1pt);
	\end{tikzpicture})=6W(\ \begin{tikzpicture}[baseline={([yshift=-.5ex]current bounding box.center)}]
	\draw ( 90:0.5) -- (-90:0.5);
	\end{tikzpicture}\ ) $,
\item $W(\begin{tikzpicture}[baseline={([yshift=-.5ex]current bounding box.center)}]
	\draw ( 90:0.6) -- ( 90:0.3)
          (210:0.6) -- (210:0.3)
          (330:0.6) -- (330:0.3)
          (210:0.3) -- ( 90:0.3)
          ( 90:0.3) -- (330:0.3)
          (330:0.3) -- (210:0.3);
	\fill[black]( 90:0.3) circle (1pt)
                (210:0.3) circle (1pt)
                (330:0.3) circle (1pt);
	\end{tikzpicture})=3W(\begin{tikzpicture}[baseline={([yshift=-.5ex]current bounding box.center)}]
	\draw ( 90:0.6) -- (0,0)
          (210:0.6) -- (0,0)
          (330:0.6) -- (0,0);
	\fill[black](0,0) circle (1pt);
	\end{tikzpicture}) $,
\item $W(\begin{tikzpicture}[baseline={([yshift=-.5ex]current bounding box.center)}]
	\draw ( 30:0.6) -- ( 0.2, 0.2)
          (-30:0.6) -- ( 0.2,-0.2)
          (150:0.6) -- (-0.2, 0.2)
          (210:0.6) -- (-0.2,-0.2)
          (0.2,0.2) -- (0.2,-0.2)
          (0.2,0.2) -- (-0.2,0.2)
          (-0.2,-0.2) -- (0.2,-0.2)
          (-0.2,-0.2) -- (-0.2,0.2);
	\fill[black] ( 0.2, 0.2) circle (1pt)
                 (-0.2,-0.2) circle (1pt)
                 (-0.2,0.2) circle (1pt)
                 (0.2,-0.2) circle (1pt);
	\end{tikzpicture})=W(\begin{tikzpicture}[baseline={([yshift=-.5ex]current bounding box.center)}]
	\draw ( 30:0.6) -- ( 0, 0.2)
          (-30:0.6) -- ( 0,-0.2)
          (150:0.6) -- (-0, 0.2)
          (210:0.6) -- (-0,-0.2)
          (-0,0.2) -- (-0,-0.2);
	\fill[black] ( 0, 0.2) circle (1pt)
                 (-0,-0.2) circle (1pt);
	\end{tikzpicture})+W(\begin{tikzpicture}[baseline={([yshift=-.5ex]current bounding box.center)}]
	\draw ( 30:0.6) -- ( 0.3, 0)
          (-30:0.6) -- ( 0.3,-0)
          (150:0.6) -- (-0.3, 0)
          (210:0.6) -- (-0.3,-0)
          (-0.3,0) -- (0.3,-0);
	\fill[black] ( 0.3, 0) circle (1pt)
                 (-0.3,-0) circle (1pt);
	\end{tikzpicture})+3W(\begin{tikzpicture}[baseline={([yshift=-.5ex]current bounding box.center)}]
	\draw ( 30:0.6)  ..  controls ( 0, 0.2) and  (0,-0.2)  .. (-30:0.6)
          (150:0.6) ..  controls ( 0, 0.2) and  (0,-0.2)  .. (210:0.6);
	\end{tikzpicture})+\\\phantom{W(\begin{tikzpicture}[baseline={([yshift=-.5ex]current bounding box.center)}]
	\draw ( 30:0.6) -- ( 0.2, 0.2)
          (-30:0.6) -- ( 0.2,-0.2)
          (150:0.6) -- (-0.2, 0.2)
          (210:0.6) -- (-0.2,-0.2)
          (0.2,0.2) -- (0.2,-0.2)
          (0.2,0.2) -- (-0.2,0.2)
          (-0.2,-0.2) -- (0.2,-0.2)
          (-0.2,-0.2) -- (-0.2,0.2);
	\fill[black] ( 0.2, 0.2) circle (1pt)
                 (-0.2,-0.2) circle (1pt)
                 (-0.2,0.2) circle (1pt)
                 (0.2,-0.2) circle (1pt);
	\end{tikzpicture})=}+3W(\begin{tikzpicture}[baseline={([yshift=-.5ex]current bounding box.center)}]
	\draw ( 30:0.6)  ..  controls ( 0.3, 0) and  (-0.3, 0)  .. (150:0.6)
          (-30:0.6) ..  controls ( 0.3, 0) and  (-0.3, 0)  .. (210:0.6);
	\end{tikzpicture})+3W(\begin{tikzpicture}[baseline={([yshift=-.5ex]current bounding box.center)}]
	\draw ( 30:0.6)  -- (210:0.6)
          (-30:0.6)  -- (150:0.6);
	\end{tikzpicture})$.
\end{enumerate}
\end{theorem}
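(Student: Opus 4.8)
The plan is to reduce each of the three identities to an equality of $\fg$-invariant tensors attached to the external legs, and then to verify those tensor identities inside the defining representation of $\SL_3$ on $\BC^3$. Since $W$ is linear and each assertion compares closed diagrams that agree \emph{outside} a local fragment meeting the rest of the diagram at a fixed number $k$ of legs in fixed positions, it suffices to show that the two internal fragments define one and the same invariant tensor in $\fg^{\otimes k}$: the remainder of the diagram (placing the legs on the Wilson loop, multiplying in $U(\fg)$, and taking the trace in the standard representation) is then one and the same linear functional applied to both. Thus statements~(1), (2) and~(3) become identities of invariant tensors in $\fg^{\otimes 2}$, $\fg^{\otimes 3}$ and $\fg^{\otimes 4}$, respectively.

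The computational engine is the \emph{color-line} (birdtrack) reduction in the defining representation, resting on three rules. A trivalent vertex carries the bracket, so on matrices it is the difference of the two orderings $e_ae_b-e_be_a$ of the incident lines. Every internal edge carries the contraction $\sum_a e_a\otimes e_a^*$, which by the $\SL_3$ completeness (Fierz) relation
\[
\sum_a (e_a)_{ij}\,(e_a^*)_{kl}=\delta_{il}\delta_{jk}-\tfrac{1}{3}\,\delta_{ij}\delta_{kl}
\]
equals the transposition on $\BC^3\otimes\BC^3$ minus $\tfrac13$ times the identity; the $-\tfrac13$ records the tracelessness of $\SL_3$. Finally, each closed fundamental loop produced by these contractions evaluates to $\mathrm{Tr}(\mathrm{id}_3)=3$. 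Applying these rules turns any internal fragment into a finite signed sum of leg-connection patterns weighted by powers of $3$, and the three identities become finite, checkable combinatorial identities among such patterns.

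For~(1), opening the two internal edges of the bubble with the Fierz relation collapses it, by invariance, to a scalar multiple of the single edge; the scalar is the adjoint Casimir eigenvalue in the trace normalization, equal to $2h^\vee=6$ for $\SL_3$, and one reads this off directly from the loop count together with the $-\tfrac13$ terms. For~(2) the triangle is the cyclic contraction of three brackets; rewriting one vertex by the Jacobi identity and applying~(1) to the resulting bubble reduces the triangle to the single vertex (the star), the coefficient being half the adjoint Casimir summed over the two orientations, namely $3$. Both~(1) and~(2) are, in effect, consequences of simplicity together with the value $h^\vee=3$, and are robust under the color-line bookkeeping.

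The genuinely delicate case, and the main obstacle, is~(3). Here the square is a four-cycle of trivalent vertices, so resolving it expands into up to $2^4$ vertex orderings combined with $2^4$ choices of transposition-versus-identity from the four Fierz edges. Each surviving term routes the four external legs into one of the five admissible connection patterns — the two one-edge dumbbells, the two pairs of cups, and the crossing pair of chords — while the spare color lines close into loops contributing factors of $3$. The crux is to carry out this enumeration, track cancellations among the many terms, and confirm that after collecting like patterns the weights are exactly $1,1,3,3,3$ as stated; the factors of $3$ arise precisely from closed fundamental loops, and the relative signs and the integer coefficients are fixed by the $-\tfrac13$ pieces of the Fierz relation. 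I expect assembling these coefficients correctly to be the error-prone step, so I would cross-validate the final five-term identity by direct evaluation of both sides on the explicit basis of traceless $3\times 3$ matrices fixed in Section~\ref{s3}.
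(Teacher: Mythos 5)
First, a structural point: the paper contains no proof of Theorem~\ref{thm7} at all --- it is imported as a black box from Kuga--Yoshizumi~\cite{yoshizumi1999some} and then used to evaluate $W(J_{i,0})$. So your attempt is not being measured against an argument in the paper; it stands or falls on its own. Your general strategy is sound: equality of the two local fragments as invariant tensors in $\fg^{\otimes k}$ is indeed sufficient, since any common ambient diagram applies one and the same linear map to that tensor (and this sufficiency holds for the universal weight system valued in $ZU(\SL_3)$, which is what the paper's $W$ is --- note its values are polynomials in $c_2$, not numbers, so your parenthetical ``taking the trace in the standard representation'' is too narrow, though harmlessly so); and because the defining representation is faithful, the induced inclusion $\fg^{\otimes k}\hookrightarrow\End(\BC^3)^{\otimes k}$ is injective, so verifying the identities by Fierz/birdtrack calculus in $\End(\BC^3)^{\otimes k}$ is legitimate. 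Your parts (1) and (2) are essentially complete and the constants are right: the bubble is the adjoint Casimir in the trace normalization, $2N=6$ for $\SL_3$ (consistent with the paper's $1/\lambda=6$ elsewhere), and the Jacobi identity halves it to give the coefficient $3$ for the triangle.

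The genuine gap is part (3), which is the real content of the theorem and the only part whose coefficients are specific to $\SL_3$ rather than formal consequences of simplicity (for general $\SL_N$ the box fragment does \emph{not} reduce to these five patterns; extra invariants involving the symmetric $d$-tensor survive). For (3) you do not prove anything: you correctly identify the five target patterns (two dumbbells and three chord pairings) and the mechanism (expand four vertices and four Fierz edges, collect patterns, count closed loops as factors of $3$), but you then explicitly defer the execution --- ``I expect assembling these coefficients correctly to be the error-prone step, so I would cross-validate\dots''. The assertion that the surviving weights are exactly $1,1,3,3,3$ is precisely the statement to be proved, and no term of the expansion, no cancellation, and no coefficient is actually computed in the proposal; a conditional intention to check the answer later is not a proof. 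To close this you would need either to carry out the enumeration in full (or the equivalent finite verification on basis elements of $\SL_3^{\otimes 4}$, which is a finite but substantial computation), or to do what the paper does and simply cite~\cite{yoshizumi1999some}.
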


In particular, for the Jacobi diagrams $J_{i,0}$, the above formulas yield
\begin{align*}
W(J_{i,0})&=W(\begin{tikzpicture}[baseline={([yshift=-.5ex]current bounding box.center)}]
	\draw (0,0) circle (1);
	\draw ( 30:1) -- ( 0.6, 0.2)
          (-30:1) -- ( 0.6,-0.2)
          (150:1) -- (-0.6, 0.2)
          (210:1) -- (-0.6,-0.2)
          ( 0.6, 0.2) -- ( 0.6,-0.2)
          (-0.6, 0.2) -- (-0.6,-0.2)
          ( 0.6, 0.2) -- ( 0.1, 0.2)
          ( 0.6,-0.2) -- ( 0.1,-0.2)
          (-0.6, 0.2) -- (-0.1, 0.2)
          (-0.6,-0.2) -- (-0.1,-0.2)
          ( 0.4, 0.2) -- ( 0.4,-0.2)
          (-0.4, 0.2) -- (-0.4,-0.2)
          ( 0.2, 0.2) -- ( 0.2,-0.2)
          (-0.2, 0.2) -- (-0.2,-0.2);
	\fill[black] ( 0.6, 0.2) circle (1pt)
                 (-0.6, 0.2) circle (1pt)
                 ( 0.6,-0.2) circle (1pt)
                 (-0.6,-0.2) circle (1pt)
                 ( 0.4, 0.2) circle (1pt)
                 (-0.4, 0.2) circle (1pt)
                 ( 0.4,-0.2) circle (1pt)
                 (-0.4,-0.2) circle (1pt)
                 ( 0.2, 0.2) circle (1pt)
                 (-0.2, 0.2) circle (1pt)
                 ( 0.2,-0.2) circle (1pt)
                 (-0.2,-0.2) circle (1pt)
                 ( 30:1) circle (1pt)
                 (-30:1) circle (1pt)
                 (150:1) circle (1pt)
                 (210:1) circle (1pt);
    \node at (0,0) {\ldots};
    %\node at (0,0.5) {\ldots};    
    \node [below,scale=0.75] at ( 0.6,-0.2) {\tiny $i$};
    \node [below,scale=0.75] at ( 0.4,-0.2) {\tiny i-$1$};
    \node [below,scale=0.75] at ( 0.2,-0.2) {\tiny i-$2$};
    \node [below,scale=0.75] at (-0.2,-0.2) {\tiny $3$};
    \node [below,scale=0.75] at (-0.4,-0.2) {\tiny $2$};
    \node [below,scale=0.75] at (-0.6,-0.2) {\tiny $1$};
    %\node [scale=0.8] at (0.3,0.7) {\tiny $j$ chords};
\end{tikzpicture})\\
&=W(\begin{tikzpicture}[baseline={([yshift=-.5ex]current bounding box.center)}]
	\draw (0,0) circle (1);
	\draw ( 30:1) -- ( 0.4, 0.2)
          (-30:1) -- ( 0.4,-0.2)
          (150:1) -- (-0.6, 0.2)
          (210:1) -- (-0.6,-0.2)
          %( 0.6, 0.2) -- ( 0.6,-0.2)
          (-0.6, 0.2) -- (-0.6,-0.2)
          ( 0.4, 0.2) -- ( 0.1, 0.2)
          ( 0.4,-0.2) -- ( 0.1,-0.2)
          (-0.6, 0.2) -- (-0.1, 0.2)
          (-0.6,-0.2) -- (-0.1,-0.2)
          ( 0.4, 0.2) -- ( 0.4,-0.2)
          (-0.4, 0.2) -- (-0.4,-0.2)
          ( 0.2, 0.2) -- ( 0.2,-0.2)
          (-0.2, 0.2) -- (-0.2,-0.2);
	\fill[black] %( 0.6, 0.2) circle (1pt)
    			 (-0.6, 0.2) circle (1pt)
                 %( 0.6,-0.2) circle (1pt)
    			 (-0.6,-0.2) circle (1pt)
                 ( 0.4, 0.2) circle (1pt)
                 (-0.4, 0.2) circle (1pt)
                 ( 0.4,-0.2) circle (1pt)
                 (-0.4,-0.2) circle (1pt)
                 ( 0.2, 0.2) circle (1pt)
                 (-0.2, 0.2) circle (1pt)
                 ( 0.2,-0.2) circle (1pt)
                 (-0.2,-0.2) circle (1pt)
                 ( 30:1) circle (1pt)
                 (-30:1) circle (1pt)
                 (150:1) circle (1pt)
                 (210:1) circle (1pt);
    \node at (0,0) {\ldots};
    %\node at (0,0.5) {\ldots};    
    %\node [below,scale=0.75] at ( 0.6,-0.2) {\tiny $i$};
    \node [below,scale=0.75] at ( 0.4,-0.2) {\tiny i-$1$};
    \node [below,scale=0.75] at ( 0.2,-0.2) {\tiny i-$2$};
    \node [below,scale=0.75] at (-0.2,-0.2) {\tiny $3$};
    \node [below,scale=0.75] at (-0.4,-0.2) {\tiny $2$};
    \node [below,scale=0.75] at (-0.6,-0.2) {\tiny $1$};
    %\node [scale=0.8] at (0.3,0.7) {\tiny $j$ chords};
\end{tikzpicture})+W(\begin{tikzpicture}[baseline={([yshift=-.5ex]current bounding box.center)}]
	\draw (0,0) circle (1);
	\draw %( 30:1) -- ( 0.4, 0.2)
          %(-30:1) -- ( 0.4,-0.2)
          (150:1) -- (-0.6, 0.2)
          (210:1) -- (-0.6,-0.2)
          %( 0.6, 0.2) -- ( 0.6,-0.2)
          (-0.6, 0.2) -- (-0.6,-0.2)
          ( 0.2, 0.2) -- ( 0.1, 0.2)
          ( 0.2,-0.2) -- ( 0.1,-0.2)
          (-0.6, 0.2) -- (-0.1, 0.2)
          (-0.6,-0.2) -- (-0.1,-0.2)
          %( 0.4, 0.2) -- ( 0.4,-0.2)
          (-0.4, 0.2) -- (-0.4,-0.2)
          ( 0.2, 0.2) -- ( 0.2,-0.2)
          (-0.2, 0.2) -- (-0.2,-0.2)
          ( 0.4, 0  ) -- ( 0.2, 0.2)
          ( 0.4, 0  ) -- ( 0.2,-0.2)
          ( 0.4, 0  ) -- ( 0.6,   0)
          ( 30:1) -- ( 0.6,   0)
          (-30:1) -- ( 0.6,   0);
	\fill[black] %( 0.6, 0.2) circle (1pt)
    			 (-0.6, 0.2) circle (1pt)
                 %( 0.6,-0.2) circle (1pt)
    			 (-0.6,-0.2) circle (1pt)
                 %( 0.4, 0.2) circle (1pt)
                 %(-0.4, 0.2) circle (1pt)
                 ( 0.4, 0  ) circle (1pt)
                 ( 0.6,   0) circle (1pt)
                 %( 0.4,-0.2) circle (1pt)
                 (-0.4,-0.2) circle (1pt)
                 ( 0.2, 0.2) circle (1pt)
                 (-0.2, 0.2) circle (1pt)
                 ( 0.2,-0.2) circle (1pt)
                 (-0.2,-0.2) circle (1pt)
                 ( 30:1) circle (1pt)
                 (-30:1) circle (1pt)
                 (150:1) circle (1pt)
                 (210:1) circle (1pt);
    \node at (0,0) {\ldots};
    %\node at (0,0.5) {\ldots};    
    %\node [below,scale=0.75] at ( 0.6,-0.2) {\tiny $i$};
    %\node [below,scale=0.75] at ( 0.4,-0.2) {\tiny i-$1$};
    \node [below,scale=0.75] at ( 0.2,-0.2) {\tiny i-$2$};
    \node [below,scale=0.75] at (-0.2,-0.2) {\tiny $3$};
    \node [below,scale=0.75] at (-0.4,-0.2) {\tiny $2$};
    \node [below,scale=0.75] at (-0.6,-0.2) {\tiny $1$};
    %\node [scale=0.8] at (0.3,0.7) {\tiny $j$ chords};
\end{tikzpicture})+3W(\begin{tikzpicture}[baseline={([yshift=-.5ex]current bounding box.center)}]
	\draw (0,0) circle (1);
	\draw ( 30:1) ..  controls ( 0.5, 0.2) and  (0.5,-0.2)  .. (-30:1)
          (150:1) -- (-0.6, 0.2)
          (210:1) -- (-0.6,-0.2)
          %( 0.6, 0.2) -- ( 0.6,-0.2)
          (-0.6, 0.2) -- (-0.6,-0.2)
          ( 0.2, 0.2) -- ( 0.1, 0.2)
          ( 0.2,-0.2) -- ( 0.1,-0.2)
          (-0.6, 0.2) -- (-0.1, 0.2)
          (-0.6,-0.2) -- (-0.1,-0.2)
          %( 0.4, 0.2) -- ( 0.4,-0.2)
          (-0.4, 0.2) -- (-0.4,-0.2)
          ( 0.2, 0.2) -- ( 0.2,-0.2)
          ( 0.2, 0.2) ..  controls ( 0.5, 0.2) and  (0.5,-0.2)  .. ( 0.2,-0.2)
          (-0.2, 0.2) -- (-0.2,-0.2);
	\fill[black] %( 0.6, 0.2) circle (1pt)
    			 (-0.6, 0.2) circle (1pt)
                 %( 0.6,-0.2) circle (1pt)
    			 (-0.6,-0.2) circle (1pt)
                 %( 0.4, 0.2) circle (1pt)
                 (-0.4, 0.2) circle (1pt)
                 %( 0.4,-0.2) circle (1pt)
                 (-0.4,-0.2) circle (1pt)
                 ( 0.2, 0.2) circle (1pt)
                 (-0.2, 0.2) circle (1pt)
                 ( 0.2,-0.2) circle (1pt)
                 (-0.2,-0.2) circle (1pt)
                 ( 30:1) circle (1pt)
                 (-30:1) circle (1pt)
                 (150:1) circle (1pt)
                 (210:1) circle (1pt);
    \node at (0,0) {\ldots};
    %\node at (0,0.5) {\ldots};    
    %\node [below,scale=0.75] at ( 0.6,-0.2) {\tiny $i$};
    %\node [below,scale=0.75] at ( 0.4,-0.2) {\tiny i-$1$};
    \node [below,scale=0.75] at ( 0.2,-0.2) {\tiny i-$2$};
    \node [below,scale=0.75] at (-0.2,-0.2) {\tiny $3$};
    \node [below,scale=0.75] at (-0.4,-0.2) {\tiny $2$};
    \node [below,scale=0.75] at (-0.6,-0.2) {\tiny $1$};
    %\node [scale=0.8] at (0.3,0.7) {\tiny $j$ chords};
\end{tikzpicture})\\
&\phantom{=}+3W(\begin{tikzpicture}[baseline={([yshift=-.5ex]current bounding box.center)}]
	\draw (0,0) circle (1);
	\draw ( 30:1) -- ( 0.2, 0.2)
          (-30:1) -- ( 0.2,-0.2)
          (150:1) -- (-0.6, 0.2)
          (210:1) -- (-0.6,-0.2)
          %( 0.6, 0.2) -- ( 0.6,-0.2)
          (-0.6, 0.2) -- (-0.6,-0.2)
          ( 0.2, 0.2) -- ( 0.1, 0.2)
          ( 0.2,-0.2) -- ( 0.1,-0.2)
          (-0.6, 0.2) -- (-0.1, 0.2)
          (-0.6,-0.2) -- (-0.1,-0.2)
          %( 0.4, 0.2) -- ( 0.4,-0.2)
          (-0.4, 0.2) -- (-0.4,-0.2)
          ( 0.2, 0.2) -- ( 0.2,-0.2)
          (-0.2, 0.2) -- (-0.2,-0.2);
	\fill[black] %( 0.6, 0.2) circle (1pt)
    			 (-0.6, 0.2) circle (1pt)
                 %( 0.6,-0.2) circle (1pt)
    			 (-0.6,-0.2) circle (1pt)
                 %( 0.4, 0.2) circle (1pt)
                 (-0.4, 0.2) circle (1pt)
                 %( 0.4,-0.2) circle (1pt)
                 (-0.4,-0.2) circle (1pt)
                 ( 0.2, 0.2) circle (1pt)
                 (-0.2, 0.2) circle (1pt)
                 ( 0.2,-0.2) circle (1pt)
                 (-0.2,-0.2) circle (1pt)
                 ( 30:1) circle (1pt)
                 (-30:1) circle (1pt)
                 (150:1) circle (1pt)
                 (210:1) circle (1pt);
    \node at (0,0) {\ldots};
    %\node at (0,0.5) {\ldots};    
    %\node [below,scale=0.75] at ( 0.6,-0.2) {\tiny $i$};
    %\node [below,scale=0.75] at ( 0.4,-0.2) {\tiny i-$1$};
    \node [below,scale=0.75] at ( 0.2,-0.2) {\tiny i-$2$};
    \node [below,scale=0.75] at (-0.2,-0.2) {\tiny $3$};
    \node [below,scale=0.75] at (-0.4,-0.2) {\tiny $2$};
    \node [below,scale=0.75] at (-0.6,-0.2) {\tiny $1$};
    %\node [scale=0.8] at (0.3,0.7) {\tiny $j$ chords};
\end{tikzpicture})+3W(\begin{tikzpicture}[baseline={([yshift=-.5ex]current bounding box.center)}]
	\draw (0,0) circle (1);
	\draw ( 30:1) -- ( 0.2,-0.2)
          (-30:1) -- ( 0.2, 0.2)
          (150:1) -- (-0.6, 0.2)
          (210:1) -- (-0.6,-0.2)
          %( 0.6, 0.2) -- ( 0.6,-0.2)
          (-0.6, 0.2) -- (-0.6,-0.2)
          ( 0.2, 0.2) -- ( 0.1, 0.2)
          ( 0.2,-0.2) -- ( 0.1,-0.2)
          (-0.6, 0.2) -- (-0.1, 0.2)
          (-0.6,-0.2) -- (-0.1,-0.2)
          %( 0.4, 0.2) -- ( 0.4,-0.2)
          (-0.4, 0.2) -- (-0.4,-0.2)
          ( 0.2, 0.2) -- ( 0.2,-0.2)
          (-0.2, 0.2) -- (-0.2,-0.2);
	\fill[black] %( 0.6, 0.2) circle (1pt)
    			 (-0.6, 0.2) circle (1pt)
                 %( 0.6,-0.2) circle (1pt)
    			 (-0.6,-0.2) circle (1pt)
                 %( 0.4, 0.2) circle (1pt)
                 (-0.4, 0.2) circle (1pt)
                 %( 0.4,-0.2) circle (1pt)
                 (-0.4,-0.2) circle (1pt)
                 ( 0.2, 0.2) circle (1pt)
                 (-0.2, 0.2) circle (1pt)
                 ( 0.2,-0.2) circle (1pt)
                 (-0.2,-0.2) circle (1pt)
                 ( 30:1) circle (1pt)
                 (-30:1) circle (1pt)
                 (150:1) circle (1pt)
                 (210:1) circle (1pt);
    \node at (0,0) {\ldots};
    %\node at (0,0.5) {\ldots};    
    %\node [below,scale=0.75] at ( 0.6,-0.2) {\tiny $i$};
    %\node [below,scale=0.75] at ( 0.4,-0.2) {\tiny i-$1$};
    \node [below,scale=0.75] at ( 0.2,-0.2) {\tiny i-$2$};
    \node [below,scale=0.75] at (-0.2,-0.2) {\tiny $3$};
    \node [below,scale=0.75] at (-0.4,-0.2) {\tiny $2$};
    \node [below,scale=0.75] at (-0.6,-0.2) {\tiny $1$};
    %\node [scale=0.8] at (0.3,0.7) {\tiny $j$ chords};
\end{tikzpicture}).
\end{align*}

We know
\[
	W(\begin{tikzpicture}[baseline={([yshift=-.5ex]current bounding box.center)}]
	\draw (0,0) circle (1);
	\draw ( 30:1) -- ( 0.4, 0.2)
          (-30:1) -- ( 0.4,-0.2)
          (150:1) -- (-0.6, 0.2)
          (210:1) -- (-0.6,-0.2)
          %( 0.6, 0.2) -- ( 0.6,-0.2)
          (-0.6, 0.2) -- (-0.6,-0.2)
          ( 0.4, 0.2) -- ( 0.1, 0.2)
          ( 0.4,-0.2) -- ( 0.1,-0.2)
          (-0.6, 0.2) -- (-0.1, 0.2)
          (-0.6,-0.2) -- (-0.1,-0.2)
          ( 0.4, 0.2) -- ( 0.4,-0.2)
          (-0.4, 0.2) -- (-0.4,-0.2)
          ( 0.2, 0.2) -- ( 0.2,-0.2)
          (-0.2, 0.2) -- (-0.2,-0.2);
	\fill[black] %( 0.6, 0.2) circle (1pt)
    			 (-0.6, 0.2) circle (1pt)
                 %( 0.6,-0.2) circle (1pt)
    			 (-0.6,-0.2) circle (1pt)
                 ( 0.4, 0.2) circle (1pt)
                 (-0.4, 0.2) circle (1pt)
                 ( 0.4,-0.2) circle (1pt)
                 (-0.4,-0.2) circle (1pt)
                 ( 0.2, 0.2) circle (1pt)
                 (-0.2, 0.2) circle (1pt)
                 ( 0.2,-0.2) circle (1pt)
                 (-0.2,-0.2) circle (1pt)
                 ( 30:1) circle (1pt)
                 (-30:1) circle (1pt)
                 (150:1) circle (1pt)
                 (210:1) circle (1pt);
    \node at (0,0) {\ldots};
    %\node at (0,0.5) {\ldots};    
    %\node [below,scale=0.75] at ( 0.6,-0.2) {\tiny $i$};
    \node [below,scale=0.75] at ( 0.4,-0.2) {\tiny i-$1$};
    \node [below,scale=0.75] at ( 0.2,-0.2) {\tiny i-$2$};
    \node [below,scale=0.75] at (-0.2,-0.2) {\tiny $3$};
    \node [below,scale=0.75] at (-0.4,-0.2) {\tiny $2$};
    \node [below,scale=0.75] at (-0.6,-0.2) {\tiny $1$};
    %\node [scale=0.8] at (0.3,0.7) {\tiny $j$ chords};
\end{tikzpicture})=W(J_{i-1,0}) ,\ \  W(\begin{tikzpicture}[baseline={([yshift=-.5ex]current bounding box.center)}]
	\draw (0,0) circle (1);
	\draw ( 30:1) -- ( 0.2, 0.2)
          (-30:1) -- ( 0.2,-0.2)
          (150:1) -- (-0.6, 0.2)
          (210:1) -- (-0.6,-0.2)
          %( 0.6, 0.2) -- ( 0.6,-0.2)
          (-0.6, 0.2) -- (-0.6,-0.2)
          ( 0.2, 0.2) -- ( 0.1, 0.2)
          ( 0.2,-0.2) -- ( 0.1,-0.2)
          (-0.6, 0.2) -- (-0.1, 0.2)
          (-0.6,-0.2) -- (-0.1,-0.2)
          %( 0.4, 0.2) -- ( 0.4,-0.2)
          (-0.4, 0.2) -- (-0.4,-0.2)
          ( 0.2, 0.2) -- ( 0.2,-0.2)
          (-0.2, 0.2) -- (-0.2,-0.2);
	\fill[black] %( 0.6, 0.2) circle (1pt)
    			 (-0.6, 0.2) circle (1pt)
                 %( 0.6,-0.2) circle (1pt)
    			 (-0.6,-0.2) circle (1pt)
                 %( 0.4, 0.2) circle (1pt)
                 (-0.4, 0.2) circle (1pt)
                 %( 0.4,-0.2) circle (1pt)
                 (-0.4,-0.2) circle (1pt)
                 ( 0.2, 0.2) circle (1pt)
                 (-0.2, 0.2) circle (1pt)
                 ( 0.2,-0.2) circle (1pt)
                 (-0.2,-0.2) circle (1pt)
                 ( 30:1) circle (1pt)
                 (-30:1) circle (1pt)
                 (150:1) circle (1pt)
                 (210:1) circle (1pt);
    \node at (0,0) {\ldots};
    %\node at (0,0.5) {\ldots};    
    %\node [below,scale=0.75] at ( 0.6,-0.2) {\tiny $i$};
    %\node [below,scale=0.75] at ( 0.4,-0.2) {\tiny i-$1$};
    \node [below,scale=0.75] at ( 0.2,-0.2) {\tiny i-$2$};
    \node [below,scale=0.75] at (-0.2,-0.2) {\tiny $3$};
    \node [below,scale=0.75] at (-0.4,-0.2) {\tiny $2$};
    \node [below,scale=0.75] at (-0.6,-0.2) {\tiny $1$};
    %\node [scale=0.8] at (0.3,0.7) {\tiny $j$ chords};
\end{tikzpicture})=W(J_{i-2,0}) .
\]
By Theorem \ref{thm7}(2), we have 
\[
	W(\begin{tikzpicture}[baseline={([yshift=-.5ex]current bounding box.center)}]
	\draw (0,0) circle (1);
	\draw %( 30:1) -- ( 0.4, 0.2)
          %(-30:1) -- ( 0.4,-0.2)
          (150:1) -- (-0.6, 0.2)
          (210:1) -- (-0.6,-0.2)
          %( 0.6, 0.2) -- ( 0.6,-0.2)
          (-0.6, 0.2) -- (-0.6,-0.2)
          ( 0.2, 0.2) -- ( 0.1, 0.2)
          ( 0.2,-0.2) -- ( 0.1,-0.2)
          (-0.6, 0.2) -- (-0.1, 0.2)
          (-0.6,-0.2) -- (-0.1,-0.2)
          %( 0.4, 0.2) -- ( 0.4,-0.2)
          (-0.4, 0.2) -- (-0.4,-0.2)
          ( 0.2, 0.2) -- ( 0.2,-0.2)
          (-0.2, 0.2) -- (-0.2,-0.2)
          ( 0.4, 0  ) -- ( 0.2, 0.2)
          ( 0.4, 0  ) -- ( 0.2,-0.2)
          ( 0.4, 0  ) -- ( 0.6,   0)
          ( 30:1) -- ( 0.6,   0)
          (-30:1) -- ( 0.6,   0);
	\fill[black] %( 0.6, 0.2) circle (1pt)
    			 (-0.6, 0.2) circle (1pt)
                 %( 0.6,-0.2) circle (1pt)
    			 (-0.6,-0.2) circle (1pt)
                 %( 0.4, 0.2) circle (1pt)
                 %(-0.4, 0.2) circle (1pt)
                 ( 0.4, 0  ) circle (1pt)
                 ( 0.6,   0) circle (1pt)
                 %( 0.4,-0.2) circle (1pt)
                 (-0.4,-0.2) circle (1pt)
                 ( 0.2, 0.2) circle (1pt)
                 (-0.2, 0.2) circle (1pt)
                 ( 0.2,-0.2) circle (1pt)
                 (-0.2,-0.2) circle (1pt)
                 ( 30:1) circle (1pt)
                 (-30:1) circle (1pt)
                 (150:1) circle (1pt)
                 (210:1) circle (1pt);
    \node at (0,0) {\ldots};
    %\node at (0,0.5) {\ldots};    
    %\node [below,scale=0.75] at ( 0.6,-0.2) {\tiny $i$};
    %\node [below,scale=0.75] at ( 0.4,-0.2) {\tiny i-$1$};
    \node [below,scale=0.75] at ( 0.2,-0.2) {\tiny i-$2$};
    \node [below,scale=0.75] at (-0.2,-0.2) {\tiny $3$};
    \node [below,scale=0.75] at (-0.4,-0.2) {\tiny $2$};
    \node [below,scale=0.75] at (-0.6,-0.2) {\tiny $1$};
    %\node [scale=0.8] at (0.3,0.7) {\tiny $j$ chords};
\end{tikzpicture})=3^{i-2}\times W(\begin{tikzpicture}[baseline={([yshift=-.5ex]current bounding box.center)}]
	\draw (0,0) circle (1);
	\draw ( 30:1) -- ( 0.5, 0)
          (-30:1) -- ( 0.5,-0)
          (150:1) -- (-0.5, 0)
          (210:1) -- (-0.5,-0)
          ( 0.5,0) -- (-0.5,-0);
	\fill[black] ( 0.5, 0) circle (1pt)
                 (-0.5,-0) circle (1pt)
                 ( 30:1) circle (1pt)
                 (-30:1) circle (1pt)
                 (150:1) circle (1pt)
                 (210:1) circle (1pt);
	\end{tikzpicture})=3^{i}c_2.
\]
By Theorem \ref{thm7}(1), we have 
\[
	W(\begin{tikzpicture}[baseline={([yshift=-.5ex]current bounding box.center)}]
	\draw (0,0) circle (1);
	\draw ( 30:1) ..  controls ( 0.5, 0.2) and  (0.5,-0.2)  .. (-30:1)
          (150:1) -- (-0.6, 0.2)
          (210:1) -- (-0.6,-0.2)
          %( 0.6, 0.2) -- ( 0.6,-0.2)
          (-0.6, 0.2) -- (-0.6,-0.2)
          ( 0.2, 0.2) -- ( 0.1, 0.2)
          ( 0.2,-0.2) -- ( 0.1,-0.2)
          (-0.6, 0.2) -- (-0.1, 0.2)
          (-0.6,-0.2) -- (-0.1,-0.2)
          %( 0.4, 0.2) -- ( 0.4,-0.2)
          (-0.4, 0.2) -- (-0.4,-0.2)
          ( 0.2, 0.2) -- ( 0.2,-0.2)
          ( 0.2, 0.2) ..  controls ( 0.5, 0.2) and  (0.5,-0.2)  .. ( 0.2,-0.2)
          (-0.2, 0.2) -- (-0.2,-0.2);
	\fill[black] %( 0.6, 0.2) circle (1pt)
    			 (-0.6, 0.2) circle (1pt)
                 %( 0.6,-0.2) circle (1pt)
    			 (-0.6,-0.2) circle (1pt)
                 %( 0.4, 0.2) circle (1pt)
                 (-0.4, 0.2) circle (1pt)
                 %( 0.4,-0.2) circle (1pt)
                 (-0.4,-0.2) circle (1pt)
                 ( 0.2, 0.2) circle (1pt)
                 (-0.2, 0.2) circle (1pt)
                 ( 0.2,-0.2) circle (1pt)
                 (-0.2,-0.2) circle (1pt)
                 ( 30:1) circle (1pt)
                 (-30:1) circle (1pt)
                 (150:1) circle (1pt)
                 (210:1) circle (1pt);
    \node at (0,0) {\ldots};
    %\node at (0,0.5) {\ldots};    
    %\node [below,scale=0.75] at ( 0.6,-0.2) {\tiny $i$};
    %\node [below,scale=0.75] at ( 0.4,-0.2) {\tiny i-$1$};
    \node [below,scale=0.75] at ( 0.2,-0.2) {\tiny i-$2$};
    \node [below,scale=0.75] at (-0.2,-0.2) {\tiny $3$};
    \node [below,scale=0.75] at (-0.4,-0.2) {\tiny $2$};
    \node [below,scale=0.75] at (-0.6,-0.2) {\tiny $1$};
    %\node [scale=0.8] at (0.3,0.7) {\tiny $j$ chords};
\end{tikzpicture})=6^{i-2}\times W(\begin{tikzpicture}[baseline={([yshift=-.5ex]current bounding box.center)}]
	\draw (0,0) circle (1);
	\draw ( 30:1) ..  controls ( 0.5, 0.2) and  (0.5,-0.2)  .. (-30:1)
          (150:1) ..  controls (-0.5, 0.2) and  (-0.5,-0.2)  .. (210:1);
	\fill[black] %( 0.6, 0.2) circle (1pt)
                 ( 30:1) circle (1pt)
                 (-30:1) circle (1pt)
                 (150:1) circle (1pt)
                 (210:1) circle (1pt);
\end{tikzpicture})=6^{i-2}c_2^2.
\]
By Theorem \ref{thm7}(2) and the STU-relations, we have
\begin{align*}
	W(\begin{tikzpicture}[baseline={([yshift=-.5ex]current bounding box.center)}]
	\draw (0,0) circle (1);
	\draw ( 30:1) -- ( 0.2,-0.2)
          (-30:1) -- ( 0.2, 0.2)
          (150:1) -- (-0.6, 0.2)
          (210:1) -- (-0.6,-0.2)
          %( 0.6, 0.2) -- ( 0.6,-0.2)
          (-0.6, 0.2) -- (-0.6,-0.2)
          ( 0.2, 0.2) -- ( 0.1, 0.2)
          ( 0.2,-0.2) -- ( 0.1,-0.2)
          (-0.6, 0.2) -- (-0.1, 0.2)
          (-0.6,-0.2) -- (-0.1,-0.2)
          %( 0.4, 0.2) -- ( 0.4,-0.2)
          (-0.4, 0.2) -- (-0.4,-0.2)
          ( 0.2, 0.2) -- ( 0.2,-0.2)
          (-0.2, 0.2) -- (-0.2,-0.2);
	\fill[black] %( 0.6, 0.2) circle (1pt)
    			 (-0.6, 0.2) circle (1pt)
                 %( 0.6,-0.2) circle (1pt)
    			 (-0.6,-0.2) circle (1pt)
                 %( 0.4, 0.2) circle (1pt)
                 (-0.4, 0.2) circle (1pt)
                 %( 0.4,-0.2) circle (1pt)
                 (-0.4,-0.2) circle (1pt)
                 ( 0.2, 0.2) circle (1pt)
                 (-0.2, 0.2) circle (1pt)
                 ( 0.2,-0.2) circle (1pt)
                 (-0.2,-0.2) circle (1pt)
                 ( 30:1) circle (1pt)
                 (-30:1) circle (1pt)
                 (150:1) circle (1pt)
                 (210:1) circle (1pt);
    \node at (0,0) {\ldots};
    %\node at (0,0.5) {\ldots};    
    %\node [below,scale=0.75] at ( 0.6,-0.2) {\tiny $i$};
    %\node [below,scale=0.75] at ( 0.4,-0.2) {\tiny i-$1$};
    \node [below,scale=0.75] at ( 0.2,-0.2) {\tiny i-$2$};
    \node [below,scale=0.75] at (-0.2,-0.2) {\tiny $3$};
    \node [below,scale=0.75] at (-0.4,-0.2) {\tiny $2$};
    \node [below,scale=0.75] at (-0.6,-0.2) {\tiny $1$};
    %\node [scale=0.8] at (0.3,0.7) {\tiny $j$ chords};
\end{tikzpicture})&=W(\begin{tikzpicture}[baseline={([yshift=-.5ex]current bounding box.center)}]
	\draw (0,0) circle (1);
	\draw ( 30:1) -- ( 0.2, 0.2)
          (-30:1) -- ( 0.2,-0.2)
          (150:1) -- (-0.6, 0.2)
          (210:1) -- (-0.6,-0.2)
          %( 0.6, 0.2) -- ( 0.6,-0.2)
          (-0.6, 0.2) -- (-0.6,-0.2)
          ( 0.2, 0.2) -- ( 0.1, 0.2)
          ( 0.2,-0.2) -- ( 0.1,-0.2)
          (-0.6, 0.2) -- (-0.1, 0.2)
          (-0.6,-0.2) -- (-0.1,-0.2)
          %( 0.4, 0.2) -- ( 0.4,-0.2)
          (-0.4, 0.2) -- (-0.4,-0.2)
          ( 0.2, 0.2) -- ( 0.2,-0.2)
          (-0.2, 0.2) -- (-0.2,-0.2);
	\fill[black] %( 0.6, 0.2) circle (1pt)
    			 (-0.6, 0.2) circle (1pt)
                 %( 0.6,-0.2) circle (1pt)
    			 (-0.6,-0.2) circle (1pt)
                 %( 0.4, 0.2) circle (1pt)
                 (-0.4, 0.2) circle (1pt)
                 %( 0.4,-0.2) circle (1pt)
                 (-0.4,-0.2) circle (1pt)
                 ( 0.2, 0.2) circle (1pt)
                 (-0.2, 0.2) circle (1pt)
                 ( 0.2,-0.2) circle (1pt)
                 (-0.2,-0.2) circle (1pt)
                 ( 30:1) circle (1pt)
                 (-30:1) circle (1pt)
                 (150:1) circle (1pt)
                 (210:1) circle (1pt);
    \node at (0,0) {\ldots};
    %\node at (0,0.5) {\ldots};    
    %\node [below,scale=0.75] at ( 0.6,-0.2) {\tiny $i$};
    %\node [below,scale=0.75] at ( 0.4,-0.2) {\tiny i-$1$};
    \node [below,scale=0.75] at ( 0.2,-0.2) {\tiny i-$2$};
    \node [below,scale=0.75] at (-0.2,-0.2) {\tiny $3$};
    \node [below,scale=0.75] at (-0.4,-0.2) {\tiny $2$};
    \node [below,scale=0.75] at (-0.6,-0.2) {\tiny $1$};
    %\node [scale=0.8] at (0.3,0.7) {\tiny $j$ chords};
\end{tikzpicture})-W(\begin{tikzpicture}[baseline={([yshift=-.5ex]current bounding box.center)}]
	\draw (0,0) circle (1);
	\draw %( 30:1) -- ( 0.4, 0.2)
          %(-30:1) -- ( 0.4,-0.2)
          (150:1) -- (-0.6, 0.2)
          (210:1) -- (-0.6,-0.2)
          %( 0.6, 0.2) -- ( 0.6,-0.2)
          (-0.6, 0.2) -- (-0.6,-0.2)
          ( 0.2, 0.2) -- ( 0.1, 0.2)
          ( 0.2,-0.2) -- ( 0.1,-0.2)
          (-0.6, 0.2) -- (-0.1, 0.2)
          (-0.6,-0.2) -- (-0.1,-0.2)
          %( 0.4, 0.2) -- ( 0.4,-0.2)
          (-0.4, 0.2) -- (-0.4,-0.2)
          ( 0.2, 0.2) -- ( 0.2,-0.2)
          (-0.2, 0.2) -- (-0.2,-0.2)
          ( 0.4, 0  ) -- ( 0.2, 0.2)
          ( 0.4, 0  ) -- ( 0.2,-0.2)
          ( 0.4, 0  ) -- ( 1,   0);
          %( 30:1) -- ( 0.6,   0)
          %(-30:1) -- ( 0.6,   0);
	\fill[black] %( 0.6, 0.2) circle (1pt)
    			 (-0.6, 0.2) circle (1pt)
                 %( 0.6,-0.2) circle (1pt)
    			 (-0.6,-0.2) circle (1pt)
                 %( 0.4, 0.2) circle (1pt)
                 %(-0.4, 0.2) circle (1pt)
                 ( 0.4, 0  ) circle (1pt)
                 ( 1,   0) circle (1pt)
                 %( 0.4,-0.2) circle (1pt)
                 (-0.4,-0.2) circle (1pt)
                 ( 0.2, 0.2) circle (1pt)
                 (-0.2, 0.2) circle (1pt)
                 ( 0.2,-0.2) circle (1pt)
                 (-0.2,-0.2) circle (1pt)
                % ( 30:1) circle (1pt)
                % (-30:1) circle (1pt)
                 (150:1) circle (1pt)
                 (210:1) circle (1pt);
    \node at (0,0) {\ldots};
    %\node at (0,0.5) {\ldots};    
    %\node [below,scale=0.75] at ( 0.6,-0.2) {\tiny $i$};
    %\node [below,scale=0.75] at ( 0.4,-0.2) {\tiny i-$1$};
    \node [below,scale=0.75] at ( 0.2,-0.2) {\tiny i-$2$};
    \node [below,scale=0.75] at (-0.2,-0.2) {\tiny $3$};
    \node [below,scale=0.75] at (-0.4,-0.2) {\tiny $2$};
    \node [below,scale=0.75] at (-0.6,-0.2) {\tiny $1$};
    %\node [scale=0.8] at (0.3,0.7) {\tiny $j$ chords};
\end{tikzpicture}) \\
&=W(J_{i-2,0})-3^{i-2}W(\begin{tikzpicture}[baseline={([yshift=-.5ex]current bounding box.center)}]
	\draw (0,0) circle (1);
	\draw (120:1)--(0,0)
	      (0:1)--(0,0)
	      (240:1)--(0,0);
	\fill[black] (120:1) circle (1pt)
				 (1,0) circle (1pt)
				 (0,0) circle (1pt)
                 (240:1) circle (1pt);
\end{tikzpicture})) \\
&=W(J_{i-2,0})-3^{i-1}c_2.
\end{align*}

Combining all these together, we get 
\[
	W(J_{i,0})=W(J_{i-1,0})+6W(J_{i-2,0})+3\times6^{i-2}c_2^2.
\]

Now, we have $W(J_{1,0})=9c_2$ and $W(J_{2,0})=9c_2(c_2+9)$. By induction, we get
\[
	W(J_{i,0})=c_2\left(\left(\frac{6^{i}}{8}+\frac{27}{40}(-2)^{i}+\frac{1}{5}3^{i}\right)c_2+\frac{9}{5}\left(3^{i}-(-2)^{i}\right)\right)\text{\ for \ } (i\ge 1) 
\]
and $W(J_{0,0}):=0$, we have the  exponential generating functions form
\[
	\sum_{n=0} W(J_{n,0})\frac{x^n}{n!}=c_2\left((\frac{1}{8}e^{6x}+\frac{27}{40}e^{-2x}+\frac{1}{5}e^{3x})c_2+\frac{9}{5}(e^{3x}-e^{-2x})\right)-c_2^2.
\]
By Corollary~\ref{cor1}(3), we conclude that
\begin{align*}
	\sum_{n=0} \bar{w}_{\SL_3}(K_{2,n})\frac{x^n}{n!}&=\sum_{n=0} \bar{w}_{\SL_3}(J_{0,n})\frac{x^n}{n!}=e^{-6x}\sum_{n=0} \bar{w}_{\SL_3}(J_{n,0})\frac{x^n}{n!}; \\
	\sum_{n=0} \bar{w}_{\SL_3}(K_{2,n})\frac{x^n}{n!}&=\frac{c_2}{40}\left((27c_2-72)e^{-8x}+(8c_2+72)e^{-3x}-40c_2e^{-6x}+5c_2 \right).
\end{align*}

Now we can reconstruct the values of the $\SL_3$ weight system
on the chord diagrams $J_{2,n}$ by making use of a result in~\cite{Filippova2020}, which says
$$\sum_{n=0} \pi(K_{2,n})\frac{x^n}{n!}=e^{-K_{0,1}x}\sum_{n=0} K_{2,n}\frac{x^n}{n!}-\left(\sum_{n=0} \pi(K_{1,n})\frac{x^n}{n!}\right)^2.
$$
We get the values of $\SL_3$ weight system on chord diagrams whose intersection graph is complete bipartite $K_{2,n}$.
\begin{theorem} We have
\begin{align*}
\sum_{n=0} w_{\SL_3}(K_{2,n})\frac{x^n}{n!}&=\frac{c_2}{40}\left((27c_2-72)e^{(c_2-8)x}+(8c_2+72)e^{(c_2-3)x}+5c_2e^{c_2x}\right),\\ 
w_{\SL_3}(K_{2,n})&=\frac{c_2}{40}\left((27c_2-72)(c_2-8)^n+(8c_2+72)(c_2-3)^n+5c_2^{n+1} \right).
\end{align*}
\end{theorem}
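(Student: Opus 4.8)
The plan is to upgrade the generating function for the \emph{primitive} values $\sum_n \bar w_{\SL_3}(K_{2,n})\frac{x^n}{n!}$, which has just been established at the end of the previous section and which I will abbreviate $\bar F(x)$, to the generating function $F(x):=\sum_n w_{\SL_3}(K_{2,n})\frac{x^n}{n!}$ for the honest weight system, using the identity of P.~Filippova quoted above as the bridge. The structural fact that makes this work is that $w_{\SL_3}$ is a homomorphism of \emph{commutative} algebras $\cA^{fr}\to ZU(\SL_3)$ together with the relation $w_{\SL_3}\circ\pi=\bar w_{\SL_3}$. Applying $w_{\SL_3}$ termwise to Filippova's identity therefore sends the product of chord diagrams to the product in $ZU(\SL_3)$ and turns each $\pi$ into a bar, yielding
\[
	\bar F(x)=e^{-w_{\SL_3}(K_{0,1})\,x}\,F(x)-\Big(\sum_n \bar w_{\SL_3}(K_{1,n})\tfrac{x^n}{n!}\Big)^2 .
\]
Since $K_{0,1}$ is the single-chord diagram, $w_{\SL_3}(K_{0,1})=c_2$, so the exponential prefactor is simply $e^{-c_2 x}$.

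The second ingredient is the generating function $\bar G(x):=\sum_n \bar w_{\SL_3}(K_{1,n})\frac{x^n}{n!}$ of the primitive values on the diagrams with intersection graph $K_{1,n}$. In such a chord diagram the $n$ outer chords are \emph{leaves}: each crosses only the central chord, and deleting one of them produces the diagram $K_{1,n-1}$. I would invoke the leaf lemma after projection, noting that for $\SL_3$ with the trace form the shift constant is $\tfrac{1}{2\lambda}=3$ (consistent with the earlier computation $w_{\SL_3}(K_2)=(c_2-3)c_2$). This gives the recursion $\bar w_{\SL_3}(K_{1,n})=-3\,\bar w_{\SL_3}(K_{1,n-1})$, hence $\bar w_{\SL_3}(K_{1,n})=(-3)^n c_2$ with base case $\bar w_{\SL_3}(K_{1,0})=c_2$; the cases $n=1,2$ match the table entries $-3c_2$ and $9c_2$. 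Summing the series yields $\bar G(x)=c_2 e^{-3x}$, so $\bar G(x)^2=c_2^2 e^{-6x}$.

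Finally I would rearrange the displayed identity to $F(x)=e^{c_2 x}\big(\bar F(x)+\bar G(x)^2\big)$ and substitute the explicit $\bar F(x)$ together with $\bar G(x)^2=c_2^2e^{-6x}$. The key simplification is that the $e^{-6x}$ contributions cancel exactly: the term $-40c_2 e^{-6x}$ inside $\bar F(x)$, multiplied by $\tfrac{c_2}{40}$, is precisely annihilated by $+c_2^2 e^{-6x}$, leaving
\[
	\bar F(x)+\bar G(x)^2=\frac{c_2}{40}\big((27c_2-72)e^{-8x}+(8c_2+72)e^{-3x}+5c_2\big).
\]
Multiplying by $e^{c_2 x}$ shifts every exponent by $c_2$ and reproduces the claimed closed form for $F(x)$; reading off the coefficient of $\frac{x^n}{n!}$ (using $5c_2\cdot c_2^n=5c_2^{n+1}$) gives the stated formula for $w_{\SL_3}(K_{2,n})$. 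The one step that requires an actual argument rather than bookkeeping is the determination of $\bar G(x)$, namely verifying that every outer chord of $K_{1,n}$ is a genuine leaf so that the leaf lemma after projection applies and collapses the whole series to $c_2 e^{-3x}$; granting this and the identification $w_{\SL_3}(K_{0,1})=c_2$, the rest is the multiplicative manipulation described above.
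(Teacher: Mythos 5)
Your proposal is correct and follows essentially the same route as the paper: the paper likewise takes the previously derived generating function for $\bar{w}_{\SL_3}(K_{2,n})$, inverts Filippova's identity by applying the algebra homomorphism $w_{\SL_3}$ (so that $\pi$ becomes the bar and $e^{-K_{0,1}x}$ becomes $e^{-c_2x}$), and reads off the closed form. Your explicit treatment of $\bar{w}_{\SL_3}(K_{1,n})=(-3)^n c_2$ via the leaf lemma after projection, and of the resulting cancellation of the $e^{-6x}$ terms, just supplies details the paper leaves implicit.
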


Our computations show,
in particular, that, for the chord diagrams
whose intersection graph is~$K_{2,n}$, their
projection to the subspace of primitives
can be represented as a linear combination
of connected Jacobi diagrams with at most~$4$ legs.
It has been conjectured earlier by S.~Lando
that the value of the weight system~$\SL_2$ on a projection to the subspace of primitives 
of a chord diagram is a polynomial in the quadratic
Casimir element~$c$ whose degree does not exceed
half the length of the largest cycle in the
intersection graph of the chord diagram.
There is a lot of evidence supporting
this conjecture, see, for example~\cite{Filippova2020,Filippova2021}.
The following more general conjecture may
explain Lando's one, and is, probably, easier to prove.

\begin{conjecture}[S.~Lando, Z.~Yang]
	Let~$D$ be a chord diagram, and let~$\ell$
	be the length of the longest cycle in it.
	Then the projection $\pi(D)$ of~$D$
	to the subspace of primitives is a linear
	combination of connected Jacobi diagrams
	with at most $\ell$ legs.
	In particular, the value of a weight system
	associated to an arbitrary Lie algebra~$\fg$
	on this projection has degree at most $\ell$.
\end{conjecture}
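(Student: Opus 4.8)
The plan is to prove the diagrammatic assertion and then read off the degree bound. Passing to $\cC^{STU}$ via the Bar-Natan isomorphism $\cA^{fr}\cong\cC^{STU}$, recall that the primitive space is spanned by \emph{connected} Jacobi diagrams, so $\pi(D)$ is automatically such a combination and the sole content of the conjecture is the bound on the number of legs. Write $P^{(\le k)}$ for the span of connected Jacobi diagrams with at most $k$ legs, let $G=G(D)$ be the intersection graph, and let $\ell$ be the length of a longest cycle of $G$. The target is $\pi(D)\in P^{(\le\ell)}$. Granting this, the ``in particular'' clause is immediate from the leg-degree lemma of Section~\ref{s4}: a connected Jacobi diagram with at most $\ell$ legs has weight-system value of degree at most $\ell$ for \emph{every} Lie algebra $\fg$. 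Aiming at the diagrammatic statement is precisely what makes the ``arbitrary $\fg$'' clause automatic and frees the final step from any Lie-algebra-specific input.

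First reductions are available. If $G$ is disconnected then $D$ is a nontrivial product, hence decomposable, so $\pi(D)=0$ and there is nothing to prove; thus assume $G$ connected. The next aim is to strip the acyclic part: for a chord $a$ whose vertex has degree $1$ in $G$, I would resolve its unique crossing by the STU relation and discard the resulting products (which die under $\pi$), reducing $\pi(D)$ to a contribution supported on the $2$-core of $G$ without increasing the number of legs. Since a degree-$1$ vertex lies on no cycle, $\ell$ is unaffected, and induction on the number of chords reduces matters to the case where $G$ equals its own $2$-core.

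For this cyclic core the plan is to continue resolving crossings by STU, exactly as the extreme internal vertices of the ladders $J_{i,j}$ are resolved in the proof of Theorem~\ref{th1}: each resolution fuses two Wilson-loop endpoints into one internal vertex, lowering the leg count, while every nontrivial product is annihilated by $\pi$. Carrying this out along a fixed longest cycle of $G$, one wants to show that the surviving connected Jacobi diagrams never need more than $\ell$ legs.

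The hard part is exactly this last leg bound: controlling the legs by the \emph{longest} cycle rather than by a coarser invariant. A straightforward STU resolution only bounds the legs by the cycle rank $b_1(G)=|E|-|V|+1$, and this is genuinely too weak — for $D=K_{2,n}$ one has $b_1=n-1\to\infty$ while $\pi(K_{2,n})$ has only $4$ legs, matching $\ell=4$. So the leg count must collapse far below the cycle rank. The explicit generating-function computations of Filippova and of the present paper establish such a collapse for individual families such as $K_{2,n}$, but they supply no uniform mechanism; at present the only structural tool producing the collapse is the recurrence of Theorem~\ref{th1}, which is tailored to the width-two ladders $J_{i,j}$. Producing an analogue of that recurrence for an arbitrary $2$-connected intersection graph — one that trades the many independent cycles of $G$ for a single longest-cycle bound — is where the real work lies, and is the reason the conjecture remains open.
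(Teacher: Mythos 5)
You have not proved the statement, and the paper does not prove it either: this is stated there as an \emph{open conjecture} (attributed to S.~Lando and Z.~Yang), supported only by evidence --- chiefly the computation that $\pi(K_{2,n})$ is a combination of connected Jacobi diagrams with at most $4$ legs, matching $\ell=4$ for $K_{2,n}$. So there is no proof in the paper to compare against, and your attempt must stand on its own, which, as you yourself concede in the final paragraph, it does not. What you establish are the easy parts: that in $\cC^{STU}$ the projection $\pi(D)$ is automatically a combination of \emph{connected} Jacobi diagrams; that the ``in particular'' clause follows from the leg--degree lemma at the end of Sec.~\ref{s4}; that a disconnected intersection graph makes $D$ decomposable, hence $\pi(D)=0$; and that leaf chords can plausibly be stripped by STU modulo decomposables. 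Even this last reduction is only sketched: the paper's leaf lemma after projection controls the \emph{values} $\bar{w}_{\fg}$, not the class of $\pi(D)$ as an element of the primitive space, so the claim that stripping degree-one vertices of $G(D)$ loses no generality and does not increase legs would itself need a diagram-level argument that you have not written.

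The genuine gap is exactly the step you flag: after reducing to the $2$-core, you must show that STU resolutions can be organized so that every surviving connected diagram has at most $\ell$ legs, where $\ell$ is the length of the \emph{longest cycle} of the intersection graph. Neither your proposal nor the paper contains any mechanism producing this bound. As you correctly observe, naive STU resolution only caps the leg count by coarser quantities (number of chords, cycle rank $b_1$), and the family $K_{2,n}$ shows the discrepancy is unbounded: $b_1=n-1\to\infty$ while the true leg count stays at $4$. The only tool in the paper that achieves such a collapse is the recurrence of Theorem~\ref{th1}, and it is tailored to the ladder diagrams $J_{i,j}$, resting on the special structure of $K_{2,n}$ and on the $\SL_3$-specific relations of Theorem~\ref{thm7}; no analogue for a general $2$-connected graph is known, and inventing one is the entire content of the conjecture. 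In short: your reductions are reasonable and your diagnosis of where the difficulty lies is accurate, but the statement remains unproved by you, just as it remains a conjecture in the paper.
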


\end{document}